\def\bl{\begin{lemma}}
\def\el{\end{lemma}}
\def\bth{\begin{theorem}}
\def\eth{\end{theorem}}
\def\bc{\begin{corollary}}
\def\ec{\end{corollary}}
\def\bcj{\begin{conjecture}}
\def\ecj{\end{conjecture}}
\def\bpr{\begin{proposition}}
\def\epr{\end{proposition}}
\def\bde{\begin{definition}}
\def\ede{\end{definition}}
\def\E{\mathbb{E}}
\newcommand{\be}{\begin{eqnarray}}
\newcommand{\ee}{\end{eqnarray}}
\newcommand{\eps}{{\mbox{$\epsilon$}}}
\newcommand{\R}{{\mathbb R}}
\newcommand{\Z}{{\mathbb Z}}
\newcommand{\N}{{\mathbb N}}
\newcommand{\with}{\hbox{ {\rm with} }}
\renewcommand{\and}{\hbox{ {\rm and} }}
\newcommand{\len}{\hbox{{\rm len}}}
\newcommand{\off}{\hbox{ {\rm off} }}
\newcommand{\on}{\hbox{ {\rm only on} }}
\newcommand{\inn}{\hbox{ {\rm in} }}
\newcommand{\C}{{\mathcal{C}}}
\newcommand{\prob}{\mbox{\bf P}}
\newcommand{\p}{\mbox{\bf p}}
\newcommand{\lr}{\leftrightarrow}
\newcommand{\lrr}{\stackrel{\,\, r}{\leftrightarrow}}
\newcommand{\lrrc}{\stackrel{\,\,r-K}{\longleftrightarrow}}
\newcommand{\lrrell}{\stackrel{\,\, \ell}{\leftrightarrow}}
\newcommand{\QQ}{\mathcal{Q}}
\newcommand{\lrrl}{\stackrel{\,\, \lambda r}{\leftrightarrow}}
\newcommand{\bcr}{B}
\newtheorem{theorem}{Theorem}[section]
\newtheorem{definition}{Definition}[section]
\newtheorem{lemma}[theorem]{Lemma}
\newtheorem{claim}[theorem]{Claim}
\newtheorem{corollary}[theorem]{Corollary}
\newtheorem{proposition}[theorem]{Proposition}
\newtheorem{conjecture}[theorem]{Conjecture}
\theoremstyle{definition}
\numberwithin{equation}{section}
\DeclareMathOperator{\Hit}{Hit}
\gdef\SetFigFont#1#2#3#4#5{} 
\begin{document}
\title{The Alexander-Orbach conjecture holds in~high~dimensions}
\author{Gady Kozma} \author{Asaf Nachmias}

\begin{abstract} We examine the incipient infinite cluster (IIC) of
    critical percolation in regimes where mean-field behavior has been
    established, namely when the dimension $d$ is large enough or when $d>6$
    and the lattice is sufficiently spread out. We find that random walk on
    the IIC exhibits anomalous diffusion with the spectral dimension
    $d_s=\frac{4}{3}$, that is, $p_t(x,x)= t^{-2/3+o(1)}$. This establishes a
    conjecture of Alexander and Orbach \cite{AO}. En route we calculate the
    one-arm exponent with respect to the intrinsic distance.

\end{abstract}

\keywords{Alexander-Orbach conjecture, anomalous diffusion, critical
  percolation, incipient infinite cluster, triangle condition, spectral
  dimension, walk dimension, chemical distance}

\maketitle


\section{{\bf  Introduction}}

We study the behavior of the simple random walk on the incipient
infinite cluster (IIC) of critical percolation on $\Z^d$. The IIC
is a random infinite connected graph containing the origin which
can be thought of as a critical cluster conditioned to be infinite
(see formal definition in \S\ref{perc} and in particular
(\ref{iic})). The {\em spectral dimension} $d_s$ of an infinite
connected graph $G$ is defined by
$$ d_s = d_s(G) = -2 \lim_{n \to \infty} {\log \p_{2n}(x,x) \over \log n} \qquad\qquad \hbox{{\rm (if this limit exists)}}\, ,$$
where $x \in G$ and $\p_n(x,x)$ is the return probability of the
simple random walk on $G$ after $n$ steps (note that if the limit
exists, then it is independent of the choice of $x$). Alexander
and Orbach \cite{AO} conjectured that $d_s = 4/3$ for the IIC in
all dimensions $d>1$, but their basis for conjecturing this in low
dimensions was mostly rough correspondence with numerical results
and it is now believed that the conjecture is false when $d<6$
\cite[7.4]{Hu}. In this paper we establish their conjecture in
high dimensions.

\begin{theorem} \label{mainthm} Let $\prob_\mathrm{IIC}$ be the IIC
measure of critical percolation on $\Z^d$ with large $d$ ($d \geq 19$ suffices)
or with $d > 6$ and sufficiently spread-out lattice and consider the
simple random walk on the IIC. Then $\prob_\mathrm{IIC}$-a.s.
$$ \lim _{n \to \infty} {\log \p_{2n} (0,0) \over \log
n} = -{2 \over 3} \, , \qquad \lim _{r \to \infty} {\log \E \tau_r
\over \log r} = 3 \, , \qquad \lim _{n \to \infty} {\log |W_n|
\over \log n} = {2 \over 3} \quad a.s. \, ,$$ where $\tau_r$ is the hitting
time of distance $r$ from the origin (the expectation $\E$ is only
over the randomness of the walk) and $W_n$ is the range of the
random walk after $n$ steps.
\end{theorem}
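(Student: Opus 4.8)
The plan is to reduce all three conclusions---a heat-kernel decay, an expected exit time, and an almost-sure range asymptotic---to a single pair of quantitative geometric estimates for the IIC in its \emph{intrinsic} (chemical) metric: the growth of ball volumes and the growth of effective resistances. Writing $B(0,r)$ for the intrinsic ball of radius $r$ about the origin and $R_{\mathrm{eff}}$ for effective resistance with unit conductances, the first step is to invoke the general principle (in the spirit of Barlow--Coulhon--Kumagai, Kumagai--Misumi and Barlow--J\'arai--Kumagai--Slade) that for a random rooted graph, estimates of the form
$$\prob\!\left(\lambda^{-1}r^{2}\le |B(0,r)|\le\lambda r^{2}\ \text{ and }\ \lambda^{-1}r\le R_{\mathrm{eff}}(0,\partial B(0,r))\le\lambda r\right)\ge 1-\tfrac{C}{\lambda},$$
holding for every $r$ and every $\lambda\ge1$, force the simple random walk to have fractal dimension $d_f=2$, resistance exponent $1$, hence walk dimension $d_w=d_f+1=3$ and spectral dimension $d_s=2d_f/d_w=4/3$, and in particular yield $\log\p_{2n}(0,0)/\log n\to-2/3$, $\log\E\tau_r/\log r\to3$ and $\log|W_n|/\log n\to2/3$ almost surely. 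Thus everything comes down to the volume and resistance estimates for the IIC, and this is where the mean-field input---Hara's $|x|^{2-d}$ asymptotics for the critical two-point function and the triangle condition, valid for $d\ge19$ or for sufficiently spread-out lattices with $d>6$---is used.

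For the volume, the upper bound reduces via Markov's inequality to $\E_{\mathrm{IIC}}|B(0,r)|\le Cr^{2}$, and the statement that $|B(0,r)|$ is only rarely much smaller than $r^{2}$ follows from a Paley--Zygmund (second-moment) argument applied to $|\partial B(0,r)|$, the two needed moments being controlled by the triangle condition through the intrinsic-metric two-point function $\hat\tau_r:=\E_{p_c}|\partial B(0,r)|$ and its pair analogue. Both of these inputs, and also the transfer from $\prob_{p_c}$ to $\prob_{\mathrm{IIC}}$---realized as the limit of the conditioned measures $\prob_{p_c}(\,\cdot\mid 0\lr x)$---rest on the \emph{intrinsic one-arm estimate} $\prob_{p_c}\bigl(\mathrm{rad}(C(0))\ge r\bigr)\asymp 1/r$, which I expect to be the main obstacle. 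Its lower bound is once again a second moment; its upper bound is the genuinely new and hard step. I would prove that bound by first showing, via the lace expansion adapted to the chemical metric, that $\hat\tau_r$ is regular and of order $1$ uniformly in $r$, and then running a renewal-type recursion over dyadic scales: vertices at level $2r$ are reached only through level $r$, so $\E[\,|\partial B(0,2r)|\mid\mathcal F_r\,]$ is dominated by $|\partial B(0,r)|$ convolved with a regular kernel, which iterates to $\prob_{p_c}(\mathrm{rad}\ge r)\le C/r$. Distilling a closed recursion out of the lace expansion in the intrinsic metric and then closing the bootstrap is the delicate point.

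For the resistance, one direction is immediate---$R_{\mathrm{eff}}(0,\partial B(0,r))\le r+1$ along any geodesic, so the upper resistance bound holds deterministically---and the remaining obstacle is the lower bound $R_{\mathrm{eff}}(0,\partial B(0,r))\ge cr$ (it suffices, in fact, to obtain $r^{1-o(1)}$) with probability at least $1-C/\lambda$. The strategy is that in the mean-field regime the IIC inside $B(0,r)$ is essentially a tree---long cycles being suppressed by quantitative forms of the triangle condition---so it looks like a backbone path issuing from the origin with critical clusters hanging off it, and the intrinsic one-arm estimate shows that only $O(\log r)$ of those hanging clusters can reach $\partial B(0,r)$. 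Hence there are only $O(\log r)$ essentially parallel routes from $0$ to $\partial B(0,r)$, the unit current flow keeps almost all of its mass on the backbone, and its Dirichlet energy---which equals $R_{\mathrm{eff}}$---is therefore at least $cr/\log r$; equivalently, one runs a Nash--Williams cutset bound with cutsets adapted to the backbone rather than to the lossy level-sets $\{x:\dist(0,x)=j\}$. The extra $\mathbb Z^d$ loops beyond the tree picture are again absorbed using the triangle condition.

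Finally I would assemble: taking $\lambda=r^{\varepsilon}$ in the volume and resistance estimates makes the ``bad scale'' probabilities summable along $r=2^{k}$, so Borel--Cantelli gives the volume-and-resistance control at every large dyadic scale $\prob_{\mathrm{IIC}}$-almost surely (with the mild $r^{\varepsilon}$ slack, which only feeds into the $o(1)$ in the exponents), and the general principle of the first step then produces the three displayed limits $\prob_{\mathrm{IIC}}$-almost surely.
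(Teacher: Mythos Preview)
Your high-level reduction---volume growth $\approx r^{2}$ and effective resistance $\approx r$ in the intrinsic metric, fed into the Barlow--J\'arai--Kumagai--Slade machinery---is exactly what the paper does, and your identification of the intrinsic one-arm bound $\prob_{p_c}(\partial B(0,r)\ne\emptyset)\le C/r$ as the crux is correct. The differences are in \emph{how} the two key inputs are obtained, and on the one-arm your proposal has a real gap.

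\textbf{One-arm upper bound.} You propose to adapt the lace expansion to the chemical metric to show $\hat\tau_r=\E|\partial B(0,r)|$ is uniformly $O(1)$, then iterate a renewal recursion. No such chemical-distance lace expansion exists, and you yourself flag ``distilling a closed recursion'' as the delicate point without indicating how to close it. The paper avoids this entirely. It uses only the triangle condition, via two devices: first, the Barsky--Aizenman bound $\prob(|\C(0)|>n)\le C n^{-1/2}$; second, the quantity $\Gamma(r)=\sup_{G\subset E(\Z^d)}\prob(\partial B(0,r;G)\ne\emptyset)$, the supremum being over subgraphs with percolation at $p_c(\Z^d)$. The sup is needed because the arm event is \emph{not monotone}, so a naive regeneration (``from level $j$, each vertex has probability $\le\prob(H(r-j))$ to reach level $r$'') fails. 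With $\Gamma$ in place one gets, for each $G$,
\[
\prob(H(3^k;G))\ \le\ \eps\,3^{k+1}\,\Gamma(3^{k-1})^2\ +\ \frac{C_1}{\sqrt\eps\,3^k},
\]
the first term coming from ``find a sparse level $j\in[3^{k-1},2\cdot3^{k-1}]$ and regenerate'', the second from Barsky--Aizenman. Optimising $\eps$ and inducting on $k$ gives $\Gamma(r)\le C/r$. This is short, elementary given the triangle condition, and does not touch the lace expansion. Your renewal idea is morally the same regeneration, but without the $\Gamma$ trick it cannot be closed, and the lace-expansion route is both unnecessary and unclear.

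\textbf{Volume upper bound $\E_{p_c}|B(0,r)|\le Cr$.} You pass over this, but it is a separate (and nontrivial) input. The paper proves it by an ``inverse BK'' inequality giving $G(2r)\ge c\,G(r)^2/r$ for $G(r)=\E|B(0,r)|$; if $G(r_0)>C_1 r_0$ for large $C_1$, iteration blows up faster than the trivial bound $G(r)\le Cr^d$, a contradiction.

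\textbf{Resistance lower bound.} Your backbone-plus-$O(\log r)$-routes picture is heavier than needed. The paper uses Nash--Williams directly on the level sets $\partial B(0,j)$ (precisely the cutsets you call ``lossy''): conditioning on $B(0,j)$, each edge from level $j-1$ to level $j$ is a \emph{lane} (reaches $\partial B(0,r)$ without returning) with probability $\le\Gamma(r/2)\le C/r$, so the expected total number of lanes in levels $j\in[r/4,r/2]$ is $O(1)$. Hence at least $r/8$ levels have at most $\lambda$ lanes with probability $\ge 1-C/(\lambda r)$, and Nash--Williams gives $R_{\mathrm{eff}}\ge c r/\lambda$. This is sharper and simpler than a backbone decomposition, and again the $\Gamma$ device is what makes the conditioning step legitimate.

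\textbf{Assembly.} The paper works throughout under $\prob_{p_c}(\,\cdot\mid 0\lr x)$ with $|x|$ large (using the two-point function $\approx|x|^{2-d}$ to compare conditional and unconditional probabilities) and passes to $\prob_{\mathrm{IIC}}$ only at the very end; it then quotes the BJKS criterion directly rather than running Borel--Cantelli by hand. Also note that $J(\lambda)$ in BJKS requires only $R_{\mathrm{eff}}\ge\lambda^{-1}r$; the upper resistance bound you wrote is not needed (it is trivially $\le r$).
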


Our main contribution is the analysis of the geometry of the IIC.
The IIC admits {\em fractal} geometry which is dramatically
different from the one of the infinite component of {\em
supercritical} percolation. The latter behaves in many ways as
$\Z^d$ after a ``renormalization'' i.e.~ignoring the local
structure \cite{GM} (see also \cite{G} for a comprehensive
exposition). In particular, the random walk on the supercritical
infinite cluster has an invariance principle, the spectral
dimension is $d_s=d$ and other $\Z^d$-like properties hold, see
\cite{DFGW, BM, B, SS, BB, MP}.

Our analysis establishes that balls of radius $r$ in the IIC
typically have volume of order $r^2$ and that the {\em effective
resistance} between the center of the ball and its boundary is of
order $r$. These facts alone suffice to control the behavior of
the random walk and yield Theorem \ref{mainthm}, as shown by
Barlow, J\'{a}rai, Kumagai and Slade \cite{BJKS}. The key
ingredient of our proofs is establishing that the critical
exponents dealing with the {\em intrinsic} metric (i.e., the
metric of the percolated graph) attain their mean-field values. It
was demonstrated first in the work of AN and Peres
\cite{NP3} that these exponents yield analogous statements to the
Alexander-Orbach conjecture in the {\em finite} graph setting. In
particular, in \cite{NP3}, the diameter and mixing time of
critical clusters in mean-field percolation on finite graphs were
analyzed.


In different settings the Alexander-Orbach conjecture was proved
by various authors. When the underlying graph is an infinite
regular tree, this was proved by Kesten \cite{K2} and Barlow and
Kumagai \cite{BK} and in the setting of {\em oriented} spread-out
percolation with $d>6$, this was proved recently in the
aforementioned paper \cite{BJKS}. \\

\subsection{Anomalous diffusion}
The fact that $d_s=4/3$ should best be contrasted against another
natural definition of dimension, and that is the volume growth
exponent $d_f$ defined, for any infinite connected graph $G$ by,
$$ d_f = \lim _{r \to \infty} { \log |B_G(x,r)| \over \log r } \qquad\qquad
\text{(if the limit exists)},$$ where $B_G(x,r)$ is the ball, in
the shortest-path metric with center $x$ and radius $r$, and
$|B_G(x,r)|$ is its volume, i.e.~the number of vertices of the
graph in it. The volume growth exponent is the graph analog of the
Hausdorff dimension. For the IIC this limit exists and is equal to
$2$ for all $x$ (Theorems \ref{exp} and \ref{lowerexp} below).
Hence we have two natural notions of dimension which give
different answers. For comparison, for $\Z^d$ we have $d_f=d_s=d$.
More generally, for any Cayley graph $d_f=d_s$ (indeed, Gromov's
celebrated result \cite{G81} shows that $d_f$ exists and is
integer for any Cayley graph, and then Theorem $5.1$ in \cite{HS}
shows that $d_f=d_s$) and there are other rich families which
satisfy this. To understand the discrepancy, we need to understand
{\em anomalous diffusion}.

Anomalous diffusion is the phenomenon that for many natural {\em
fractals}, or more precisely, graphical analogs of fractals,
random walk on the fractal is significantly slower than in
Euclidean space. In particular, while we expect a random walker on
$\Z^d$ to be at distance $t^{1/2}$ at time $t$, on a fractal we
find it in distance $t^{1/\beta}$ where $\beta\geq 2$ and often
the inequality is strict\footnote{$\beta$ is sometimes called the
``walk dimension'' and denoted by $d_w$.}. In fact, we now know
that any value of $\beta$ between $2$ and $d_f+1$ may appear
\cite{B2}. This phenomenon was first observed by physicists in the
context of disordered media \cite{AO, RT} i.e.~in our context.
Correspondingly, the first mathematical results are Kesten's
\cite{K2} who analyzed random walk on the IIC in $\Z^2$ and on an
infinite regular tree. For the IIC on a tree Kesten's results are
complete and he shows that $\beta=3$. On the IIC in two
dimensions, Kesten showed that the expected distance of the random
walk from the origin after $t$ steps is at most $t^{1/2 - \eps}$
for some $\eps>0$, hence $\beta>2$ if it exists. Despite the great
progress seen since on critical two-dimensional percolation, the
exact value of $\beta$ in this case is still unknown.

\begin{figure}
\includegraphics[scale=0.65]{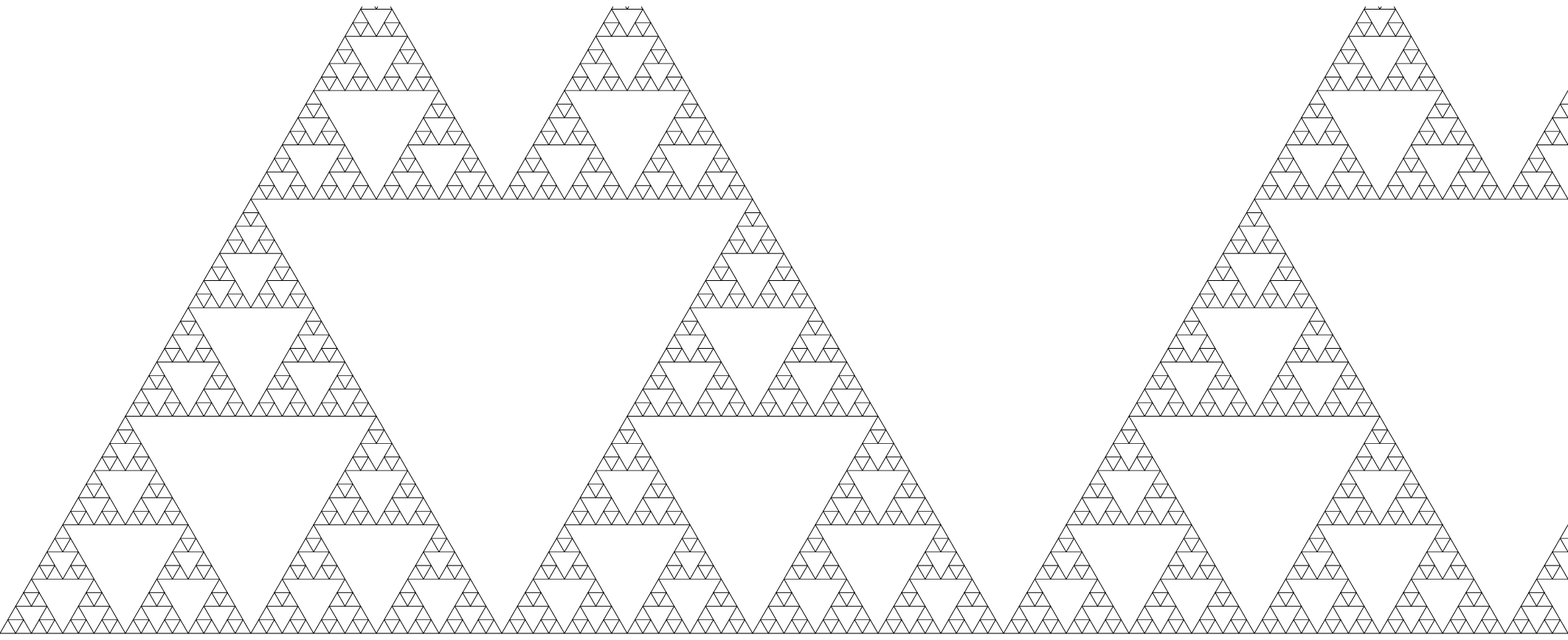}
\caption{\label{cap:gasket}A portion of the graphical Sierpinski
gasket.}
\end{figure}

The attention of the mathematical community then shifted to
regular fractals. The first to be analyzed were {\em finitely
ramified} fractals, namely fractals that can be disconnected by
the removal of a constant number of points in any scale. For
example, arbitrarily large portions of the Sierpinski gasket
(figure \ref{cap:gasket}) can be disconnected by the removal of
$3$ points. In these cases $\beta$ can be calculated explicitly,
for example for the Sierpinski gasket $\beta=\log 5/\log 2$
\cite{BP}. A significant step forward was done by Barlow and Bass
\cite{BBs, BBs2} who showed that on any generalized Sierpinski
carpet $\beta$ is well defined, and the random walk exhibits many
regularity properties analogous to those of random walk on $\Z^d$,
mutatis mutandis. Unfortunately, these techniques do not allow to
calculate $\beta$ for many natural examples, and this remains a
significant open problem.

For sufficiently ``well-behaved'' $G$ we expect that $\beta =
2d_f/d_s$. Heuristically this is easy to understand since if the
random walk reaches a distance of $\approx r=t^{1/\beta}$ it
should see $\approx r^{d_f}=t^{d_f/\beta}$ points and assuming
homogeneity we should have $\p_t(0,0)\approx t^{-d_f/\beta}$. This
explains the connection between the various ``exponents'' in
Theorem \ref{mainthm}. Among the results of Barlow and Bass
\cite{BBs,BBs2} is the proof that indeed $\beta = 2d_f/d_s$ for
any generalized Sierpinski carpet.

\subsection{Percolation}\label{perc} Bond percolation on a graph $G$ with parameter $p
\in [0,1]$ is a probability measure $\prob_p$ on random subgraph
of the $G$ obtained by retaining each edge independently with
probability $p$ and deleting it otherwise. Edges retained are
called {\em open} and edges deleted are called {\em closed}. The
graphs that interest us most are lattices in $\R^d$, in particular
$\Z^d$, and regular trees. It is well known that this model
exhibits a phase transition, that is, there exists a critical
probability $p_c(\Z^d)\in [0,1]$ such that for all $p>p_c$ almost
surely there exists an infinite connected cluster, and for any $p
< p_c$ almost surely all clusters are finite. Percolation at $p_c$
is called {\em critical} percolation.
The subcritical and
supercritical cases are understood quite well, and in neither case
is it reasonable to call the resulting graphs ``fractals''. The
subcritical case consists of finite clusters with exponential tail
on their size \cite{M, AB}. The supercritical case, as mentioned before, behaves in many ways as a perturbed version of $\Z^d$ and most interesting
quantities behave the same as on $\Z^d$. For instance, $d_f=d$, $\beta=2$,
and $d_s=d$. The structure of the resulting graph in critical percolation, however, is dramatically different.

It is widely believed that critical percolation does not exhibit
an infinite cluster almost surely. This has been established only
for the case $d=2$ by Kesten \cite{K0} and for sufficiently large
$d$ by Hara and Slade \cite{HaS0}. Proving it for all $d$ is
considered one of the most challenging problems in probability
theory. Nevertheless, for all $d>1$ it is known that in any scale there are
clusters comparable to the scale \cite[Theorem 1]{A}, and it is
conjectured that they have fractal-like properties.
Hence the natural question arises: what is the
corresponding spectral dimension $d_s$?
As already mentioned, there is a significant difference between low and high
dimensions. Let us therefore spend a little effort on the
difference between ``low'' and ``high'' dimensions in percolation.

Many models in mathematical physics exhibit an {\em upper critical
dimension} and for percolation this happens at $d=6$. The picture,
as developed by physicists, is that for $d>6$ the space is so vast
that different pieces of the critical cluster no longer interact.
The effect of this is that the geometry ``trivializes'' and for
most questions the answer would be as for percolation on an
infinite regular tree. This is also known as {\em mean-field}
behavior.
%
Aspects of this picture were confirmed rigorously but with one
important caveat. The technique used, {\em lace expansion}, is
perturbative and hence requires one of the following to hold:
\begin{itemize}
\item The dimension $d$ should be large enough ($d\geq 19$ seems to be the
  limit of current techniques).
\item The dimension should satisfy $d>6$ but the lattice needs to be
  sufficiently spread out. For example, one may take some $L$ sufficiently
  large and put an edge between every $x,y\in \Z^d$ with $|x-y|\leq L$.
\end{itemize}
Credit for these remarkable results goes to Hara and Slade
\cite{HaS0}. For $d<6$ it has been proved that percolation cannot
attain mean-field behavior \cite{CC}. Specifically, Hara and Slade
proved the that for these lattices the {\em triangle condition}
holds. The triangle condition, suggested as an indicator of mean-field
behavior by Aizenman and Newman \cite{AN} is
\be\label{triangle}
\sum_{x,y\in \Z^d} \prob_{p_c}(0\lr x)\prob_{p_c}(x\lr
y)\prob_{p_c}(0\lr y) <\infty
\ee
where $x\lr y$ denotes the event that $x$ is connected to
$y$ by an open path (for simplicity we assume that the set of
vertices of the lattice is always $\Z^d$ and denote the set of
edges by $E(\Z^d)$). To see how to analyze the behavior of critical
and near-critical percolation using the triangle condition, see
\cite{AN, BA, Ng}

A slightly different approach to mean-field behavior is via the {\em
two-point function} i.e.~the probability that $x$ is connected to
$y$ by an open path. It has the estimate, for
all $x,y\in \Z^d$,
\be \label{tpt}
\prob_{p_c} \big ( x \lr y \big
) \approx |x-y|^{2-d} \, ,
\ee where $\approx$ means that the
ratio of the quantities on the left and on the right is bounded by
two constants depending only on $d$ and $L$. Here and below we
abuse notation by considering that $0^{2-d}=1$. A simple calculation
shows that, when $d>6$, (\ref{tpt}) implies (\ref{triangle}) hence
the assumption on the two-point function is stronger. It was obtained
using the lace expansion by Hara, van der Hofstad and Slade
\cite{HaHS} for the spread-out model and $d>6$, and by Hara
\cite{Ha} for the nearest-neighbor model with $d\geq 19$ (in fact,
they obtained the right asymptotic behavior of (\ref{tpt}),
including the constant).

At present there is no known lattice in $\R^d$ for which the triangle condition is known and the two-point function is unknown (or
false). Nevertheless, We believe that there is value in noting which
results require the (formally) stronger two-point function estimate
(\ref{tpt}) and which require only the triangle condition. Reasons
to keep this distinction come from the fields of long-range
percolation \cite{BA, HHS07} and of percolation on
general transitive graphs \cite{Sc, Sc2}. In both cases the triangle
condition makes more sense and was proved in many interesting examples.
We will not dwell on these topics in this paper, but in general we
believe that any result we prove only using the triangle condition
should hold (perhaps with minor modifications) for long-range
percolation and for percolation on unimodular transitive graphs.

Returning to the Alexander-Orbach conjecture, our aim is to study
random walk on a typical large cluster. The term {\em incipient
  infinite cluster} was coined by Kesten, borrowing a vaguely-defined
term from the physics literature. His approach in \cite{K1} for the
two-dimensional case is to fix some integer $n$, to condition on the
event
$0 \lr \partial [-n,n]^2$ and then take $n \to \infty$.  In this paper
we take the approach suggested by van der Hofstad and J\'{a}rai \cite{HJ}, and
that is to fix some arbitrary far point $x$, condition on the
event $0 \lr x$, and then take $x\to\infty$. For both approaches
one still needs to show that the limit exists. This was done in
\cite{K1} for the case $d=2$, in \cite{HJ} for large $d$ as above
and in \cite{HHS} for the {\em oriented} percolation model with
$d>4$.


Formally, we endow the space of all configurations
$\{0,1\}^{E(\Z^d)}$ with the product topology (recall that $E(\Z^d)$ is the set
of edges of our lattice). We consider the conditional
measures given $0 \lr x$ and finally, the IIC is
the limit as $x\to \infty$ in the space of measures
$\mathcal{M}\big(\{0,1\}^{E(\Z^d)}\big)$ with the weak topology.
Put differently, for any cylinder
event $F$ (i.e., an event that can be determined by observing the
status of a finite number of edges) we have
\be \label{iic}
\prob_\mathrm{IIC}(F) = \lim _{|x| \to \infty} \prob_{p_c}(F \, \mid \,
0 \lr x) \, ,
\ee
\noindent where $p_c = p_c(\Z^d)$ is the
percolation critical probability. The convergence of the limit in
the right hand side, independently on how $x \to \infty$, is
proved in \cite{HJ} for $d$ large using the lace expansion. We
note in passing that the existence of the limit is not relevant
for our arguments. Indeed, even if the limit would not exist,
subsequence limits would exist due to compactness, and our results
would hold for each one. Thus the conclusions of Theorem \ref{mainthm}
hold for any lattice in $\R^d$ with $d>6$ for which the two-point
function estimate (\ref{tpt}) holds, and for
any IIC measure (i.e.~any subsequence limit as above).

\subsection{Intrinsic metric critical exponents}
The key ingredient in our proofs is showing that the {\em
intrinsic metric} critical exponents defined below assume their
mean-field values in high dimensions.

Let $G$ be a graph and write $G_p$ for the result of $p$-bond
percolation on it. Write $d_{G_p}(x,y)$ for the length of the
shortest path between $x$ and $y$ in $G_p$, or $\infty$ if there
is no such path. We call $d$ the {\em intrinsic metric} on $G_p$
--- other names in the literature include the {\em graph metric},
the {\em shortest-path
  metric} and even the {\em chemical distance}. From this point on, we always
perform critical
percolation with $p=p_c=p_c(\Z^d)$. Define the random sets
\begin{align*}
\bcr(x,r;G) &= \{ u : d_{G_{p_c}}(x,u) \leq r \} \, , \\
\partial \bcr(x,r;G) &= \{ u : d_{G_{p_c}}(x,u) = r \} \, .
\end{align*}
It will be occasionally important to take some $G \subset E(\Z^d)$
and sample $p_c(\Z^d)$-perco\-lation on $G$. Be careful not to
confuse the notation $\bcr(x,r;G)$ which refers to a random ball
in the percolation on $G$ with $B_G(x,r)$ which is just the
(deterministic) ball in $G$. In fact
$B(x,r;G)=B_{G_{p_c(\Z^d)}}(x,r)$.

We usually take $G$ to be $\Z^d$, and in
this case it would be suppressed from the notation. Our most
frequent notation is $B(x,r)$ which stands for $\bcr(x,r;\Z^d)$.

Define now the event
\begin{align*}
H(r;G) &= \Big \{ \partial B(0,r;G) \neq \emptyset \Big \} \, , \\
\intertext{and finally define} \Gamma(r)&=\sup_{G \subset E(\Z^d)}
\prob (H(r;G)) \, .
\end{align*}
Note again that we define $\Gamma$ by the maximum over all subgraphs of
$\Z^d$, but each one is ``tested'' with the $p_c$ of $\Z^d$ rather
than with its own $p_c$.

\begin{theorem}\label{exp} For any lattice $\Z^d$ with $d>6$
  satisfying the triangle condition (\ref{triangle}), there exists a
  constant $C>0$ such that

\renewcommand{\theenumi}{\roman{enumi}}
\renewcommand{\labelenumi}{(\roman{enumi})}

\begin{enumerate}

\item \label{enu:EB0r} \qquad $\E |\bcr(0,r)| \leq C r$

\item \label{enu:Gam<} \qquad $ \Gamma(r) \leq {C \over r} \, .$
\end{enumerate}
In particular, $\prob(H(r)) \leq C/r$.
\renewcommand{\theenumi}{(\arabic{enumi})}
\renewcommand{\labelenumi}{\arabic{enumi}}

\end{theorem}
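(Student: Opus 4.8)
The plan is to prove the two bounds more or less in tandem, since the second statement feeds back into the first. For part (\ref{enu:EB0r}), the natural approach is to write
$$\E|\bcr(0,r)| = \sum_{u\in\Z^d}\prob\big(d_{p_c}(0,u)\le r\big) = \sum_{k=0}^{r}\sum_{u}\prob\big(d_{p_c}(0,u)=k\big),$$
and to bound, for each $k$, the quantity $\sum_u \prob(d_{p_c}(0,u)=k)$, i.e.\ the expected size of the $k$-sphere. The key combinatorial tool is the BK (van den Berg--Kesten) inequality applied to the event that there is an open path of length exactly $k$ from $0$ to $u$: such a path can be split at its midpoint, so the event is contained in a disjoint occurrence of two "half-length" connection events, and one obtains a convolution-type recursion. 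Combined with the two-point function / triangle-condition input --- specifically that $\sum_u \prob_{p_c}(0\lr u)^2<\infty$, or more precisely the open-triangle bound $\sup_u \sum_{x}\prob(0\lr x)\prob(x\lr u)\prob(u\lr 0)\to 0$ as one of the connections is forced to be long --- this should show that the expected size of the $k$-sphere is bounded by a constant uniformly in $k$. Summing over $k\le r$ gives $\E|\bcr(0,r)|\le Cr$. This is essentially the intrinsic-metric analogue of the classical fact that $\E|\mathcal C(0)|<\infty$ fails at $p_c$ but the "layered" version is linear; the same strategy works verbatim when $\Z^d$ is replaced by any subgraph $G$, since BK and monotonicity only use the product structure, so in fact $\E|\bcr(0,r;G)|\le Cr$ for all $G\subset E(\Z^d)$ with the same constant.

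For part (\ref{enu:Gam<}), fix $G\subset E(\Z^d)$ and consider $\prob(H(r;G))=\prob(\partial B(0,r;G)\neq\emptyset)$. The plan is a second-moment / expectation-ratio argument at an intermediate scale: on the event $H(2r;G)$ one expects $|\bcr(0,r;G)|$ to be large (the cluster has to "pass through" radius $r$ on its way to radius $2r$), so
$$\E\big[\,|\bcr(0,r;G)|\cdot\mathbf{1}_{H(2r;G)}\,\big]\ \ge\ c\,r\cdot\prob(H(2r;G)),$$
while the unconditional bound from part (\ref{enu:EB0r}) gives $\E|\bcr(0,r;G)|\le Cr$; dividing yields $\prob(H(2r;G))\le C/r$, and taking the supremum over $G$ gives $\Gamma(2r)\le C/r$, hence $\Gamma(r)\le C'/r$ after adjusting constants, which also dominates $\prob(H(r))=\prob(H(r;\Z^d))$. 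To make the lower bound on the conditional expectation rigorous one proceeds along the shortest path realizing $d_{p_c}(0,u)\ge 2r$ for some $u$: its first $r$ steps lie in $\bcr(0,r;G)$, and more robustly one sums over the vertices $v$ on $\partial B(0,r;G)$ and uses that conditionally on $v$ being at intrinsic distance exactly $r$, an independent region of the graph must still reach intrinsic distance $r$ from $v$, which by the tree-like structure happens with probability comparable to $\Gamma(r)$ itself --- giving a recursive inequality of the form $\prob(H(2r;G))\le \frac{C}{r}$ once combined with part (i). Some care is needed because the regions explored from $0$ to radius $r$ and from $\partial B(0,r)$ onward are not literally independent; the standard fix is to condition on the explored cluster $\bcr(0,r;G)$ together with its boundary-edge status and then use that the unexplored part is still $p_c$-percolation on a subgraph, which is exactly why $\Gamma$ was defined as a supremum over subgraphs $G$.

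The main obstacle I anticipate is the second step: getting a clean lower bound $\E[|\bcr(0,r;G)|\,;\,H(2r;G)]\ge c\,r\,\prob(H(2r;G))$ without circularity, since the most naive version of "if you reach distance $2r$ you have $\gtrsim r$ vertices within distance $r$" is true deterministically only along one path and gives merely $\ge r$, which combined with $\E|\bcr(0,r;G)|\le Cr$ yields $\prob(H(2r;G))\le C$ --- useless. The real argument must extract that, on $H(2r;G)$, the ball $\bcr(0,r;G)$ is typically \emph{much} larger than $r$ (of order $r^2$, heuristically), or equivalently must run the expectation argument at \emph{every} scale $2^j\le r$ and telescope; this is where the full strength of the triangle condition, and the subgraph-supremum definition of $\Gamma$ allowing one to restart the percolation on the unexplored graph, both get used. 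I would expect the bookkeeping of this conditioning-and-restarting (the "lace-expansion-free" regeneration argument) to be the technical heart of the proof, with parts (i) and the BK estimates being comparatively routine.
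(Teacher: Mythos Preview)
Your proposal has genuine gaps in both parts.

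\textbf{Part (i).} The midpoint-splitting idea does not give what you claim. If $d_{p_c}(0,u)=k$ and you split a shortest path at its midpoint $v$, BK yields
\[
\prob\big(d_{p_c}(0,u)=k\big)\le \sum_v \prob\big(0\stackrel{\le k/2}{\leftrightarrow}v\big)\,\prob\big(v\stackrel{\le k/2}{\leftrightarrow}u\big),
\]
and summing over $u$ gives only $\E|\partial B(0,k)|\le G(k/2)^2$, not a uniform constant; the triangle condition does not enter here in any useful way, because the events you are splitting are \emph{intrinsic}-radius events, not full connection events. The paper's argument is essentially the \emph{reverse} of this: it proves a super-multiplicative inequality $G(2r)\ge c\,G(r)^2/r$ via a ``reverse BK'' lemma (showing $\sum_{x,y}\prob(\{0\lrr x\}\circ\{x\lrr y\})\ge cG(r)^2$, which is where the open triangle condition is actually used), together with an over-counting argument to relate this sum to $|B(0,2r)|$. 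One then assumes $G(r_0)\ge C_1 r_0$ for some $r_0$, iterates to get $G(2^k r_0)$ growing faster than any polynomial, and contradicts the trivial bound $G(r)\le Cr^d$.

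\textbf{Part (ii).} You have correctly identified the regeneration structure and the reason $\Gamma$ is defined as a supremum over subgraphs, and the paper does exactly this conditioning-on-$B(0,j;G)$ step. What you are missing is the external input that anchors the induction: the Barsky--Aizenman estimate $\prob(|\C(0)|>n)\le C_1 n^{-1/2}$, which follows from the triangle condition. The paper's recursion is
\[
\Gamma(3^k)\ \le\ \eps\,3^{k+1}\,\Gamma(3^{k-1})^2\ +\ \frac{C_1}{\sqrt{\eps}\,3^k},
\]
where the first term comes from your regeneration (on $\{|\C_G(0)|\le \eps 9^k\}$ find a sparse level and restart) and the second from Barsky--Aizenman. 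Choosing $\eps=A^{-4/3}$ with the inductive hypothesis $\Gamma(3^{k-1})\le A/3^{k-1}$ gives $\Gamma(3^k)\le (27+C_1)A^{2/3}/3^k\le A/3^k$ for $A$ large. If instead you try to replace Barsky--Aizenman by Markov's inequality with part (i), the error term becomes $C/(\eps\,3^k)$ rather than $C/(\sqrt{\eps}\,3^k)$; optimizing then gives $\Gamma(3^k)\le c'A/3^k$ with a constant $c'$ that is \emph{not} small, so the induction does not close. Your ``telescope over all scales'' suggestion does not repair this: each regeneration step still only produces a $\Gamma(\cdot)^2$ term, and without an independent $O(1/r)$ input the recursion has the trivial fixed point $\Gamma\equiv\text{const}$.
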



The corresponding lower bounds to Theorem \ref{exp} are much
easier to prove and are not needed for the proof of Theorem
\ref{mainthm}. We state them for the sake of completeness.

\begin{theorem} \label{lowerexp} For any lattice $\Z^d$ with $d>6$
  satisfying the two-point function estimate
  (\ref{tpt}), there exists a constant $c>0$ such that

\begin{enumerate}

\item \qquad $\E |\bcr(0,r)| \geq c r \, ,$

\item \qquad $\prob(H(r)) \geq \frac{c}{r} \, .$
\end{enumerate}

\end{theorem}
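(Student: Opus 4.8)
The plan is to obtain both lower bounds from the two-point function estimate (\ref{tpt}) together with a standard second-moment argument applied to the (extrinsic) cluster, then transfer from the extrinsic to the intrinsic metric using a soft comparison. First I would prove part (1). Let $Z_r = |B(0,r)|$ be the number of vertices within intrinsic distance $r$ of the origin. To bound $\E Z_r$ from below it suffices to exhibit a set of vertices, each at Euclidean distance $O(r)$ from $0$, that are connected to $0$ with probability $\gtrsim r^{2-d}$ and are within intrinsic distance $r$ with probability bounded below; but controlling the intrinsic distance directly is awkward, so instead I would first establish the statement for the extrinsic ball and then note that intrinsic distance can only be smaller after we localize. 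Concretely, fix a constant $\lambda$ and consider the box $[-\lambda r^{1/2}\wedge r, \lambda r^{1/2}\wedge r]^d$ — here one must be slightly careful since for $d>6$ a connected cluster of Euclidean diameter $R$ typically has intrinsic diameter $\asymp R^2$, so the relevant scale for "$\le r$ intrinsic steps" is Euclidean scale $r^{1/2}$. Summing (\ref{tpt}) over $x$ with $|x|\le r^{1/2}$ gives $\sum_{|x|\le r^{1/2}} \prob_{p_c}(0\lr x) \asymp \sum_{k\le r^{1/2}} k^{d-1} k^{2-d} \asymp r$. This shows that the extrinsic cluster to Euclidean radius $r^{1/2}$ has expected size $\asymp r$; combined with the (mean-field) fact that such a piece has intrinsic radius $O(r)$ with positive probability, one extracts $\E|B(0,r)| \ge cr$.

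For the lower bound on $\prob(H(r))$, i.e. that $\partial B(0,r)\ne\emptyset$ with probability $\gtrsim 1/r$, I would run the classical Paley–Zygmund / second-moment argument on the random variable $Z = |B(0,r)| - |B(0,r-1)| = |\partial B(0,r)|$, or more robustly on a truncated first moment. The first moment $\E Z \asymp 1$ follows by differencing the estimate $\E|B(0,r)|\asymp r$ (more honestly: $\E|B(0,r')| \asymp r'$ for all $r'\le r$ gives that on average an intrinsic sphere has $\asymp 1$ vertex). The second moment $\E Z^2 = \sum_{x,y} \prob(0\lr x \text{ in } r \text{ steps},\, 0\lr y \text{ in } r\text{ steps})$ is bounded, via a standard tree-graph / BK-type decomposition through a common vertex, by something like $\sum_z \prob(0\lr z)\,\E|B(z,r)|\,\cdot(\text{first-moment factors})$, which using (\ref{tpt}) and part (1) one estimates to be $O(r)$. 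Paley–Zygmund then yields $\prob(Z>0) \ge (\E Z)^2/\E Z^2 \gtrsim 1/r$, and $\{Z>0\}\subseteq H(r)$.

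The main obstacle I expect is the extrinsic-to-intrinsic transfer: (\ref{tpt}) is purely a statement about Euclidean distance, and one must show that a cluster reaching Euclidean distance $\asymp r^{1/2}$ actually contains vertices at \emph{intrinsic} distance exactly $\asymp r$ with the right (constant-order) probability, and — for the second-moment bound — that the two-point intrinsic connection probabilities factorize appropriately through a common point. In the nearest-neighbour/spread-out high-dimensional setting this is exactly where one invokes mean-field machinery; fortunately, since these are only \emph{lower} bounds, the crude inequality "intrinsic distance $\le$ (Euclidean distance)$\times$(worst-case detour)" is useless but "any open path realizing a Euclidean connection has some length, and with positive probability that length is not too large" can be made to work by a sprinkling / restriction argument, or alternatively by comparison with the corresponding statement on the infinite tree (where the analogous bounds are due to Kesten \cite{K2}). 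Since Theorem \ref{lowerexp} is not needed for Theorem \ref{mainthm}, I would not optimize constants and would present this transfer at the level of a sketch. \QED
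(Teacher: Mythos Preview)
Your overall strategy --- Euclidean scale $r^{1/2}$ for part (i), Paley--Zygmund for part (ii) --- matches the paper, but both parts have a concrete gap.

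\textbf{Part (i).} You correctly identify that $\sum_{|x|\le r^{1/2}}\prob(0\lr x)\asymp r$, but then hand-wave the extrinsic-to-intrinsic transfer with ``sprinkling'' or ``tree comparison''. Neither of these is the right tool, and your own paragraph on the ``main obstacle'' shows you do not yet have a mechanism. The paper's key observation, which you are missing, is the deterministic inequality
\[
d_{\Z^d_p}(0,x)\;\le\;\big|\{y:\{0\lr y\}\circ\{y\lr x\}\}\big|
\]
on the event $\{0\lr x\}$ (any shortest path contributes each of its vertices to the right-hand set). Taking expectations, BK and (\ref{tpt}) give $\E[d_{\Z^d_p}(0,x)\mathbf{1}_{0\lr x}]\le C\sum_y|y|^{2-d}|x-y|^{2-d}\le C|x|^{4-d}$, hence $\E[d_{\Z^d_p}(0,x)\mid 0\lr x]\le C|x|^2$. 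Markov then gives $\prob(d(0,x)\le r\mid 0\lr x)\ge 1/2$ for $|x|\le c r^{1/2}$, and summing over such $x$ yields $\E|B(0,r)|\ge cr$. This is a clean two-line argument once you see it; no sprinkling is needed.

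\textbf{Part (ii).} Applying Paley--Zygmund to $Z=|\partial B(0,r)|$ does not work with the moments you claim. The tree-graph/BK decomposition through a common vertex $z$ gives
\[
\E Z^2\;\le\;\sum_{x,y,z}\prob(0\lrr z)\prob(z\lrr x)\prob(z\lrr y)\;=\;\big(\E|B(0,r)|\big)^3\;\le\;Cr^3,
\]
not $O(r)$ as you assert; there is no mechanism in the BK bound to exploit the constraint $d(0,x)=r$ \emph{exactly}. With $\E Z\asymp 1$ and $\E Z^2\le Cr^3$ you only get $\prob(H(r))\gtrsim r^{-3}$. The fix (and what the paper does) is to apply Paley--Zygmund to $Z'=|B(0,\lambda r)\setminus B(0,r)|$ for a suitable constant $\lambda$: using the upper bound $\E|B(0,r)|\le C_1 r$ from Theorem~\ref{exp}(i) and the lower bound from part (i) one chooses $\lambda$ so that $\E Z'\ge cr$, while the same tree-graph bound gives $\E(Z')^2\le\E|B(0,\lambda r)|^2\le Cr^3$. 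Then $(\E Z')^2/\E(Z')^2\ge c/r$, and $\{Z'>0\}\subset H(r)$.
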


The {\em extrinsic} metric corresponds to the shortest-path metric
in $\Z^d$ while the intrinsic metric corresponds to the (random)
shortest-path metric in the percolated graph $\Z^d_p$. The
classical {\em one-arm} critical exponent $\rho>0$ describes the
power law decay of the probability that the origin is connected to
sphere of radius $r$ {\em in the extrinsic metric}, that is
$$ \prob \Big ( \exists \, x \with |x|=r \hbox{ {\rm such that} } 0 \lr x) = r^{-1/\rho + o(1)} \, ,$$
where $|x|$ denotes the usual Euclidean norm. This exponent takes
the value $48/5$ in the two dimensional triangular grid, as shown
by Lawler, Schramm and Werner \cite{LSW} and Smirnov \cite{Sm}. In
the case of an infinite regular tree we have $\rho=1$ by a Theorem
of Kolmogorov \cite{Ko} (here the critical probability is $p_c={1
\over \ell-1}$, where $\ell$ is the vertex degree of the tree). In
high dimensions it was conjectured that $\rho=1/2$ (see \cite{S} and the upcoming paper \cite{KN} for a proof) ---
a surprising belief at first, since we expect critical exponents
in high dimensions to take the same value they do on a tree.

Measuring distance with respect to the {\em intrinsic} metric
offers a simple explanation of this discrepancy. Indeed, as the
extrinsic and intrinsic metrics on the tree are the same, we have
that on a tree $\prob(H(r))\approx r^{-1}$ and by Theorem
\ref{exp} above we learn that this is the same order in the
high-dimension lattices. Similar results exist for critical
Erd\H{o}s-R\'enyi random graphs \cite{NP3}. \\

\subsection{About the proof}
From the point of view of analysis of fractals, the IIC is one of
the simplest cases to handle because of its tree-like structure.
Indeed, the main difficulty is the proof of Theorem \ref{exp}.
Once that is proved the proof proceeds roughly as follows. Write
$B_{{\rm IIC}}(0,r)$ and $\partial B_{{\rm IIC}}(0,r)$ for the
corresponding shortest-path metric balls in the IIC. Firstly,
since $\Gamma(r)\leq r^{-1}$ and $\E|\bcr(0,r)|\approx r$ we learn
that $|B_{{\rm IIC}} (0,r)| \approx r^2$. Secondly, the intrinsic
metric exponents show that there are $\geq cr$ ``approximately
pivotal'' edges --- $\lambda$-lanes in the language of \cite{NP3}
--- between $0$ and $\partial B(0,r)$ (Lemma \ref{effres}) and
therefore the {\em electric resistance} $R_{\rm eff}$ between $0$
and $\partial B_{{\rm IIC}}(0,r)$ is $\approx r$. We conclude that
$B_{{\rm IIC}}(0,r)$ is a graph on approximately $r^2$ vertices
with effective resistance between $0$ and $\partial B_{{\rm
IIC}}(0,r)$ of order $r$ --- the same structure a critical
branching process conditioned to survive to level $r$ has with
high probability.

Now, there are many ways to connect electric resistance and volume
estimates to hitting times, and in fact we simply quote a
perfectly-tailored-for-our-needs result from \cite{BJKS} which
concludes the proof of the theorem. However, let us briefly
describe a somewhat different but very natural approach. It starts with the
fact \cite{CRRST} that, in any finite graph $G$, and for any two
vertices $x$ and $y$,
$$
\Hit(x,y)+\Hit(y,x)=2R_{\rm eff}(x,y)\cdot|E(G)|
$$
where $\Hit(x,y)$ is the expected hitting time from $x$ to $y$ (or
in other words, the left hand side is the expected {\em commute
time} between $x$ and $y$). Since in our case $|E(G)|\approx r^2$
we get that the commute time is $\approx r^3$. Now, in general the
commute time only bounds the hitting time $\Hit(0,\partial B_{{\rm
IIC}}(0,r))$ from above, but in {\em strongly recurrent} graphs
this turns out to be sharp \cite{KM}. Thus, in time $r^3$ the
random walk has walked only in $B_{{\rm IIC}}(0,r)$ and it can be
shown that the end point is approximately uniformly distributed
(the walk has mixed in $B_{{\rm IIC}}(0,r)$ in that time). Since
$|B_{{\rm IIC}}(0,r)|\approx r^2$ we get that
$\p_{r^3}(0,0)=r^{-2}$, as required. The details of this approach
are described in the setting of finite graphs in \cite{NP3} and
can be adapted to this case as well.

A natural approach towards the proof of the volume growth exponent (part (i)
of Theorem \ref{exp}) is to show that $\E|\partial B(0,2r)|\geq c(\E|\partial
B(0,r)|)^2$ which would show that if $\E|\partial B(0,r)|$ is too large for
some $r$, it will start exploding, leading to a contradiction. We were not
able to pull this approach directly --- $\partial B(0,r)$ is hard to analyze
--- our substitute is to show that $\E|B(0,2r)|\geq (c/r)(\E|B(0,r)|)^2$.
This can be proved using relatively standard ``inverse BK inequalities'' and
the same argument then applies.

The proof of the
one-arm exponent (part (ii) of Theorem \ref{exp}) uses the precise
determination of the exponent $\delta$ by Barsky and Aizenman
\cite{BA}, which allows us to use a regeneration argument to show,
roughly, that $\Gamma(r) \leq r(\Gamma(r/4))^2 + C/r$ (the second
term comes from the results of \cite{BA}), from which the estimate
follows by induction. The lengths of the proofs of both pieces are
equivalent, which might hide the fact that the proof of the
one-arm exponent was much harder for us to obtain.

A final note is due about the use of $\Gamma(r)$. It would have
been more natural to discuss only $\prob(H(r))$ rather than
$\Gamma(r)$. However, we need to use a regeneration argument.
Basically we claim that, once you reached a certain level $r$,
each vertex $v\in \partial \bcr(0,r)$ has probability $\leq
\Gamma(s)$ to ``reach'' to $\partial \bcr(0,r+s)$. Heuristically,
one would assume that it would work even with $H(s)$, because the
part of the cluster you already ``explored'', $\bcr(0,r)$ only
makes it more difficult to reach the level $r+s$. The problem is
that $H(r)$ is not a {\em monotone} event. In general, if you have
a graph $G$ satisfying $\partial B_G(0,r)=\emptyset$ and you
remove an edge, it could increase the distance to some vertex $v$,
pushing it outside of $B_G(0,r)$, and restoring the event
$\partial B_G(0,r)\neq\emptyset$. Hence it is not possible to use
the regeneration argument with $H(r)$ --- there is simply no
inequality in either direction relating $\prob(H(r))$ with the
conditional probability of $H(r)$ given some partial configuration
of edges. The use of $\Gamma(r)$ helps us circumvent this problem.
See the proofs of lemma \ref{effres} (page \pageref{effres}) and
of part (\ref{enu:Gam<}) of Theorem \ref{exp} (page
\pageref{page:II}).

\subsection{Organization and notation.}
In \S\ref{sec:AO} we show how the intrinsic metric critical exponents
(Theorem \ref{exp}), together with (\ref{tpt}) 
yield our main result, Theorem \ref{mainthm}. In \S\ref{sec:exp}
we derive the mean-field estimates of Theorem \ref{exp} and \ref{lowerexp}.


For $x,y \in \Z^d$ we write $x \lr y$ for the event that $x$ is
connected to $y$ by an open path. We write $x \lrr y$ if there is
an open path of length $\leq r$ connecting $x$ and $y$. In
order to improve readability, we denote constants which
depend only on $d$ and the lattice by $C$ (to denote a large constant) and $c$
(to denote a small constant) and as we do not attempt to optimize these
constants we frequently use the same notation to indicate
different constants. For two monotone events of percolation $A$
and $B$ we write $A \circ B$ for the event that $A$ and $B$ occurs
in disjoint edges and we often use the van den Berg and Kesten inequality (BK
for short) $\prob(A\circ B) \leq \prob(A)\prob(B)$ (see \cite{vdBK, G} or
\cite{BCR} for more details).




\vspace{.1 in}

\section{{\bf Deriving the Alexander-Orbach Conjecture from Theorem
    \ref{exp}}}\label{sec:AO}

In this entire section we assume the two-point function estimate (\ref{tpt}) and Theorem \ref{exp}.
We will use results of Barlow, J\'{a}rai, Kumagai and Slade \cite{BJKS} which
are stated for {\em random graphs} and hence are perfectly suited for our
case. It is interesting to note that $\log \log$ fluctuations really do exist,
and hence any result for {\em fixed graphs} will naturally be somewhat
imprecise.
To state the results of \cite{BJKS}, we need the following
definitions. Given an instance of the IIC (that is, an infinite
connected graph containing the origin) write $B_{{\rm IIC}}(0,r)$
and $\partial B_{{\rm IIC}}(0,r)$ for the ball of radius $r$
around $0$ and the boundary of the ball, respectively, in the
shortest path metric on the IIC. Denote by $R_{{\rm eff}}
(0,\partial B_{{\rm IIC}}(0,r))$ the effective resistance between
$0$ and $\partial B_{{\rm IIC}}(0,r)$ when one considers $B_{{\rm
IIC}}(0,r)$ as an electric network and gives each edge a
resistance of $1$
--- see \cite{DS} for a formal definition. For $\lambda >1$ we
write $J(\lambda)$ for the set of $r$'s for which the following
conditions hold:
\begin{enumerate}
\item $\lambda ^{-1} r^2 \leq  |B_{{\rm IIC}}(0,r)| \leq \lambda
r^2 \, ,$

\item $ R_{{\rm eff}} (0, \partial B_{{\rm IIC}}(0,r)) \geq
\lambda^{-1} r \, .$
\end{enumerate}
Theorems 1.5 and 1.6 of \cite{BJKS} relate the information of
volume and effective resistance growth to the behavior of random
walks. They can be stated as follows.

\begin{theorem}[\cite{BJKS}] \label{RW}
If there exist some constants $K, q>0$ such that for any large
enough $r$ we have \be \label{maincond} \prob_\mathrm{IIC} \Big (r
\in J(\lambda) \Big ) \geq 1 - K\lambda^{-q} \, , \ee then the
conclusions of Theorem \ref{mainthm}
hold.

\end{theorem}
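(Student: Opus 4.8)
The statement is, in essence, Theorems~1.5 and~1.6 of \cite{BJKS}, and the plan is to check that hypothesis~(\ref{maincond}) is exactly the input their argument needs; here is how I would carry it out. The first, almost free, observation is that effective resistance carries a \emph{deterministic} upper bound $R_{\mathrm{eff}}(0,\partial B_{\mathrm{IIC}}(0,r))\le r$: a single geodesic from $0$ to $\partial B_{\mathrm{IIC}}(0,r)$ is a series of $r$ unit resistors, and deleting the remaining edges only raises resistance (Rayleigh monotonicity). Hence on the event $\{r\in J(\lambda)\}$ one gets the \emph{two-sided} bounds $\lambda^{-1}r\le R_{\mathrm{eff}}(0,\partial B_{\mathrm{IIC}}(0,r))\le r$ and $\lambda^{-1}r^2\le|B_{\mathrm{IIC}}(0,r)|\le\lambda r^2$, and, by bounded degree, $|E(B_{\mathrm{IIC}}(0,r))|\asymp|B_{\mathrm{IIC}}(0,r)|$ up to lattice-dependent constants; from here everything is deterministic graph analysis performed on this event.

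\textbf{Hitting times.} For $\E_0\tau_r$ from above I would contract $\partial B_{\mathrm{IIC}}(0,r)$ to a single vertex in the finite network $B_{\mathrm{IIC}}(0,r)$ and use the commute-time identity \cite{CRRST}: since distances change by at most one per step the walk on the IIC agrees with the walk on this network up to time $\tau_r$, so $\E_0\tau_r\le\Hit(0,\partial B_{\mathrm{IIC}}(0,r))+\Hit(\partial B_{\mathrm{IIC}}(0,r),0)=2R_{\mathrm{eff}}(0,\partial B_{\mathrm{IIC}}(0,r))\,|E(B_{\mathrm{IIC}}(0,r))|\le C\lambda r^3$. For the matching lower bound I would use the killed Green's function $g(0,y)=\E_0\#\{k<\tau_r:X_k=y\}$, which satisfies $g(0,0)=\deg(0)R_{\mathrm{eff}}(0,\partial B_{\mathrm{IIC}}(0,r))$ and, because effective resistance is a metric, $g(0,y)\ge R_{\mathrm{eff}}(0,\partial B_{\mathrm{IIC}}(0,r))-R_{\mathrm{eff}}(0,y)$; since $R_{\mathrm{eff}}(0,y)\le d_{\mathrm{IIC}}(0,y)$, every vertex of $B_{\mathrm{IIC}}(0,\lfloor r/(2\lambda)\rfloor)$ contributes at least $cr/\lambda$ to $\E_0\tau_r=\sum_y g(0,y)$, and by condition~(1) applied at radius $r/(2\lambda)$ there are at least $cr^2/\lambda^3$ of them, giving $\E_0\tau_r\ge c\lambda^{-4}r^3$.

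\textbf{Heat kernel, range, and passing to the limit.} The bound $\E_0\tau_r\asymp r^3$ (up to $\lambda$-powers) already forces, for $r\asymp(n/\lambda)^{1/3}$, that $\Pr_0(X_n\in B_{\mathrm{IIC}}(0,r))\ge\tfrac12$ by Markov, whence reversibility and Cauchy--Schwarz give $\p_{2n}(0,0)\ge c/|B_{\mathrm{IIC}}(0,r)|\ge c\lambda^{-O(1)}n^{-2/3}$; the matching upper bound $\p_{2n}(0,0)\le C\lambda^{O(1)}n^{-2/3}$ and the displacement estimate $\max_{k\le n}d_{\mathrm{IIC}}(0,X_k)=n^{1/3+o(1)}$ come from the Nash/Faber--Krahn inequality that volume growth $r^2$ together with resistance growth $r$ entail (the strongly recurrent regime, cf.\ \cite{KM,NP3,BJKS}), after which $|W_n|\asymp|B_{\mathrm{IIC}}(0,n^{1/3})|\asymp n^{2/3}$ because the confined walk equilibrates over the ball. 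Finally I would upgrade these in-probability estimates to the almost-sure $\log$-limits by running them along $r=2^k$ (and $n=2^j$) with $\lambda=\lambda_k:=k^{2/q}$, so that $\sum_k K\lambda_k^{-q}<\infty$ and Borel--Cantelli applies; since $\lambda_k=2^{o(k)}$ the polynomial-in-$\lambda$ slack is $r^{o(1)}$, and monotonicity of $\E_0\tau_r$ and $|W_n|$ in $r,n$ lets one interpolate between consecutive dyadic scales, yielding the three limits of Theorem~\ref{mainthm}.

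\textbf{Main obstacle.} The genuinely hard step is the one black-boxed above: obtaining \emph{two-sided} on-diagonal heat kernel bounds --- equivalently, a lower-tail estimate $\Pr_0(\tau_r\le\varepsilon r^3)\le\varepsilon'$ --- from volume and resistance growth that are controlled only around the origin and only with high probability, where the honest $\log\log$ fluctuations rule out any cruder deterministic formulation. Carrying this out is exactly the content of \cite{BJKS}, which is why it is cleanest to quote Theorem~\ref{RW} as stated and verify only that (\ref{maincond}) matches its hypothesis.
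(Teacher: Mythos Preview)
The paper does not prove Theorem~\ref{RW} at all: it is stated purely as a citation of Theorems~1.5 and~1.6 of \cite{BJKS} and then invoked as a black box in the proof of Theorem~\ref{mainthm}. Your proposal correctly recognises this from the outset and, going beyond what the paper does, sketches the mechanism behind the \cite{BJKS} argument before arriving at the same conclusion the paper reaches implicitly --- that one should simply quote the result.

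The sketch itself is sound in outline: the commute-time identity for the upper bound on $\E_0\tau_r$, the Green's-function summation for the lower bound, Cauchy--Schwarz for the on-diagonal heat-kernel lower bound, and Borel--Cantelli along dyadic scales with $\lambda_k=k^{2/q}$ to upgrade to almost-sure limits are exactly the right ingredients, and indeed this is essentially the architecture of \cite{BJKS} and \cite{KM}. Two small imprecisions worth noting: the Green's-function inequality you write is missing a degree factor (one has $g(0,y)/\deg(y)\ge R_{\mathrm{eff}}(0,\partial B)-R_{\mathrm{eff}}(0,y)$, not $g(0,y)$ itself), harmless here since degrees are bounded; and your lower bound on $\E_0\tau_r$ tacitly assumes $r/(2\lambda)\in J(\lambda)$ in addition to $r\in J(\lambda)$, which costs only a union bound. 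You are entirely right that the substantive difficulty --- the on-diagonal heat-kernel \emph{upper} bound under merely high-probability, origin-centred control with genuine $\log\log$ fluctuations --- is the content of \cite{BJKS}, and your final recommendation to cite rather than reprove is precisely what the paper does.
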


We begin with some lemmas leading to the fact that condition
(\ref{maincond}) holds in our setting. We start with some volume
estimates.

\begin{lemma}\label{volupper} For any lattice $\Z^d$ with $d>6$ satisfying
(\ref{tpt}), there exists a
constant $C>0$ such that for any integer $r\geq 1$ and any $x\in
G$ with $|x|$ sufficiently large we have
$$ \E \big [ |\bcr(0,r)|\cdot {\bf 1} _{\{ 0 \lr x \} } \big ]\leq Cr^2
|x|^{2-d} \, .$$
\end{lemma}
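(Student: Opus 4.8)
The plan is to write $|\bcr(0,r)|$ as a sum of connection indicators and, for each summand, to separate the connection to $u$ from the connection to the far point $x$ by a ``branch point'', taking care to keep the intrinsic distances sharp.

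First I would reduce to $|x|>2Lr$, where $L$ denotes the range of the lattice (so $L=1$ in the nearest-neighbour case). Since each open edge changes the position by at most $L$, we have $\dist(0,y)\ge|y|/L$ in the intrinsic metric, so for $|x|>Lr$ the event $\{0\lrr x\}$ is empty; in particular $x\notin\bcr(0,r)$ on the event $\{0\lr x\}$, and moreover $|x-w|\ge|x|/2$ for every $w\in\bcr(0,r)$, whence by the two-point estimate~(\ref{tpt}) and $2-d<0$ we have $\prob_{p_c}(w\lr x)\le C|x-w|^{2-d}\le C|x|^{2-d}$ uniformly over $w\in\bcr(0,r)$.

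Next, writing $|\bcr(0,r)|=\sum_{s=0}^{r}\sum_{u}\mathbf 1_{\{\dist(0,u)=s\}}$, we get
$$\E\big[|\bcr(0,r)|\,\mathbf 1_{\{0\lr x\}}\big]=\sum_{s=0}^{r}\sum_{u}\prob\big(\dist(0,u)=s,\ 0\lr x\big).$$
On the event $\{\dist(0,u)=s\}\cap\{0\lr x\}$, fix an open geodesic $\gamma$ from $0$ to $u$ and an open path from $0$ to $x$; let $w$ be the vertex of $\gamma$ farthest from $0$ that is joined to $x$ by an open path meeting $\gamma$ only at $w$, and let $a$ be its position on $\gamma$. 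The three pieces --- $\gamma$ restricted to $[0,a]$, $\gamma$ restricted to $[a,s]$, and the path from $w$ to $x$ --- are pairwise edge-disjoint; and since $\gamma$ is a geodesic, its two restrictions are geodesics realising $\dist(0,w)=a$ and $\dist(w,u)=s-a$ \emph{exactly} (any shortcut would contradict $\dist(0,u)=s$). A tree-graph/BK-type factorisation that keeps track of the intrinsic distances in this way (in the spirit of~\cite{NP3}) then yields
$$\prob\big(\dist(0,u)=s,\ 0\lr x\big)\ \le\ \sum_{w}\ \sum_{a=0}^{s}\ \prob\big(\dist(0,w)=a\big)\,\prob\big(\dist(w,u)=s-a\big)\,\prob_{p_c}(w\lr x).$$

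Finally I would sum this up. Summing over $u$ turns $\sum_{u}\prob(\dist(w,u)=s-a)$ into $\E|\partial\bcr(0,s-a)|=:g(s-a)$; since $\prob(\dist(0,w)=a)>0$ forces $|w|\le La\le Lr<|x|/2$, the first paragraph gives $\prob_{p_c}(w\lr x)\le C|x|^{2-d}$; and summing over $w$ turns $\sum_{w}\prob(\dist(0,w)=a)$ into $g(a)$. Hence, interchanging the order of summation and using $\sum_{b=0}^{t}g(b)=\E|\bcr(0,t)|$,
$$\E\big[|\bcr(0,r)|\,\mathbf 1_{\{0\lr x\}}\big]\ \le\ C|x|^{2-d}\sum_{s=0}^{r}\sum_{a=0}^{s}g(a)\,g(s-a)\ =\ C|x|^{2-d}\sum_{a=0}^{r}g(a)\,\E|\bcr(0,r-a)|.$$
Now Theorem~\ref{exp}(\ref{enu:EB0r}) gives both $\E|\bcr(0,r-a)|\le Cr$ and $\sum_{a=0}^{r}g(a)=\E|\bcr(0,r)|\le Cr$, so the right-hand side is at most $C|x|^{2-d}\cdot Cr\cdot Cr=C'r^{2}|x|^{2-d}$, as claimed.

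The one genuinely delicate point is the factorisation in the third paragraph: it is essential that it be carried out with the \emph{exact} intrinsic distances. The cruder monotone variant --- replacing $\{\dist(0,u)=s\}$ by $\{\dist(0,u)\le s\}$ and union-bounding over the splitting level --- over-counts and produces $\sum_{a}\E|\bcr(0,a)|\,\E|\bcr(0,r-a)|\approx r^{3}$ in place of $\sum_{a}g(a)\,\E|\bcr(0,r-a)|\approx r^{2}$, i.e.\ the weaker bound $Cr^{3}|x|^{2-d}$. Keeping track of the exact distances is therefore the main obstacle; everything else is routine bookkeeping with the BK inequality and Theorem~\ref{exp}.
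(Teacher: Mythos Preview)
Your factorisation in the third paragraph is the gap. The events $\{\dist(0,w)=a\}$ and $\{\dist(w,u)=s-a\}$ are not increasing, so the van den Berg--Kesten inequality does not apply; and to invoke Reimer's inequality you would need disjoint cylinder witnesses, but a witness certifying $\dist(0,w)=a$ \emph{exactly} must record not only the open path $\gamma[0,a]$ but also enough closed edges to rule out every shorter path from $0$ to $w$, and there is no reason those closed edges avoid the witness for $\{\dist(w,u)=s-a\}$ (or even the open edges of $\gamma[a,s]$). Nothing in \cite{NP3} supplies such an inequality for non-monotone exact-distance events, and you yourself flag this step as ``the one genuinely delicate point'' without actually carrying it out.

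The complication is moreover self-inflicted. The paper's argument stays entirely with increasing events and never introduces the level $s$ or the splitting parameter $a$: on $\{0\lrr z\}\cap\{0\lr x\}$ one takes $y$ to be the last vertex of a length-$\le r$ path from $0$ to $z$ from which $x$ can be reached off that path, so that $\{0\lrr y\}\circ\{y\lrr z\}\circ\{y\lr x\}$ holds. The point is that \emph{each} sub-path of a path of length $\le r$ has length $\le r$, so the \emph{same} radius $r$ works for both factors and no sum over a splitting level is needed. Standard BK and two applications of Theorem~\ref{exp}(\ref{enu:EB0r}) then give
\[
\sum_{z,y}\prob(0\lrr y)\,\prob(y\lrr z)\,\prob(y\lr x)\ \le\ C|x|^{2-d}\bigl(\E|\bcr(0,r)|\bigr)^2\ \le\ Cr^2|x|^{2-d}.
\]
Your ``$r^3$'' from the crude monotone variant arose only because you kept the extraneous sum over the splitting level $a$; once that sum is dropped, the monotone route already yields $r^2$ and the unjustified exact-distance factorisation is not needed at all.
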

Here and below ``$|x|$ sufficiently large'' means essentially that $|x|>4Lr$
where $L$ is the length of the longest edge in $E(\Z^d)$. This point will not
play any role, though.
\begin{proof}[{\bf Proof}] We have
$$ \E \big [ |\bcr(0,r)| \cdot {\bf 1} _{\{ 0 \lr x \} } \big ] = \sum _{z
\in \Z^d} \prob \big ( 0 \lrr z \, , \, 0 \lr x )\, .$$ If $0 \lrr
z$ and $0 \lr x$, then there must exist some $y$ such that the
events $0 \lrr y$, $y \lrr z$ and $y \lr x$ occur disjointly. So
\begin{align*}
\E \big [ |\bcr(0,r)| \cdot {\bf 1}_{\{ 0 \lr x \} } \big ] &=
\sum_z \prob(0\lrr z \, , \, 0\lr x) \\
&\leq \sum_{z,y}\prob(\{0\lrr y\}\circ \{y\lrr z\}\circ \{y\lr
x\}) \intertext{and applying the BK inequality twice,} &\leq
\sum_{z,y}\prob(0\lrr y)\prob(y\lrr z)\prob(y\lr x)
\intertext{which by (\ref{tpt}) and the fact that $|x-y|\geq
|x|/2$ when $x$ is sufficiently large}
&\leq C|x|^{2-d}\sum_{z,y}\prob(0\lrr y)\prob(y \lrr z)
\intertext{We now use part (\ref{enu:EB0r}) of Theorem \ref{exp}
to sum, first over $z$ and then over $y$. We get} &\leq
Cr|x|^{2-d}\sum_y \prob(0\lrr y) \leq Cr^2|x|^{2-d}.\qedhere
\end{align*}
\end{proof}


\begin{lemma} \label{vollower} For any lattice $\Z^d$ with $d>6$ satisfying
(\ref{tpt}), there exists a
constant $C>0$ such that for any $r\geq 1$, any $\eps < 1$ and any
$x\in \Z^d$ with $|x|$ sufficiently large we have that
$$ \prob \Big ( |\bcr(0,r)| \leq \eps r^2 \, , \, 0 \lr x
\Big ) \leq C \eps |x|^{2-d} \, .$$
\end{lemma}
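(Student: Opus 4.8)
The plan is to hunt, inside any small ball $B(0,r)$, for a ``thin'' intrinsic sphere $\partial B(0,k)$ with $k\asymp r$, and then to observe that getting past a thin sphere to the far point $x$ is costly. The heuristic is that $|B(0,r)|=\sum_{k\le r}|\partial B(0,k)|$, so $|B(0,r)|\le\eps r^2$ forces some sphere of size $\lesssim\eps r$, and the chance of escaping past it to $x$ is governed by the two--point estimate \eqref{tpt} together with the one--arm estimate of Theorem~\ref{exp}. The one point that needs care is that we do not know \emph{where} the thin sphere sits, and a naive union bound over its possible locations would cost a factor $r$.

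For the reductions, take $|x|>4Lr$ ($L$ the longest edge length), so $d(0,x)\ge |x|/L>r$; then on $\{0\lr x\}$ the discrete intermediate value theorem — if some vertex lies at intrinsic distance exactly $j$ from $0$, a geodesic realizes every intermediate distance — gives $\partial B(0,k)\neq\emptyset$ for all $k\le r$, hence $|B(0,r)|\ge r+1$, so the event is empty unless $\eps r^2\ge r$. On $\{|B(0,r)|\le\eps r^2\}$ one has $2\eps r\cdot\#\{k:|\partial B(0,k)|>2\eps r\}\le\sum_{k=0}^r|\partial B(0,k)|\le\eps r^2$, so at most $\lfloor r/2\rfloor$ of the $r+1$ indices are ``thick''; thus at least $\lceil r/2\rceil+1$ of them have $|\partial B(0,k)|\le 2\eps r$, and by pigeonhole one such index lies in $\{\lceil r/2\rceil,\dots,r\}$. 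I would then introduce, for the level--by--level breadth--first exploration of the cluster of $0$ with its natural filtration $(\mathcal F_k)$ (reveal $B(0,k)$ and the status of every edge meeting $B(0,k-1)$), the stopping time
\[
k^*:=\min\{k\in\{\lceil r/2\rceil,\dots,r\}:|\partial B(0,k)|\le 2\eps r\},
\]
so that $\{|B(0,r)|\le\eps r^2,\,0\lr x\}\subseteq\{k^*\le r,\,0\lr x\}$.

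Conditioning on $\mathcal F_{k^*}$: any open path from $0$ to $x$ must leave $B(0,k^*)$, and can only do so through a vertex of $\partial B(0,k^*)$ along an unexplored edge; the unexplored edges form an independent $p_c$--percolation, and each $v\in\partial B(0,k^*)$ has $|v|\le Lr\le |x|/4$, so \eqref{tpt} gives $\prob(0\lr x\mid\mathcal F_{k^*})\le\sum_{v\in\partial B(0,k^*)}\prob(v\lr x)\le C|x|^{2-d}\,|\partial B(0,k^*)|$ on $\{k^*\le r\}$. Summing over the value of $k^*$ (each $\{k^*=k\}\in\mathcal F_k$),
\[
\prob(0\lr x,\,k^*\le r)\ \le\ C|x|^{2-d}\,\E\big[|\partial B(0,k^*)|\,\mathbf{1}_{\{k^*\le r\}}\big].
\]
Now on $\{k^*\le r\}$ we have $|\partial B(0,k^*)|\le 2\eps r$ by construction, while $|\partial B(0,k^*)|\ge 1$ forces $\partial B(0,\lceil r/2\rceil)\neq\emptyset$ (again by the intermediate value theorem, since $k^*\ge\lceil r/2\rceil$), i.e.\ the event $H(\lceil r/2\rceil)$; hence
\[
\E\big[|\partial B(0,k^*)|\,\mathbf{1}_{\{k^*\le r\}}\big]\ \le\ 2\eps r\,\prob\big(H(\lceil r/2\rceil)\big)\ \le\ 2\eps r\cdot\frac{2C}{r}\ =\ C'\eps,
\]
using part \eqref{enu:Gam<} of Theorem~\ref{exp} in the form $\prob(H(m))\le C/m$. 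Combining the last two displays proves the lemma.

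The step I expect to carry the weight is the final one. A direct union bound over the $\asymp r$ candidate sphere locations $k\in\{\lceil r/2\rceil,\dots,r\}$ would give $\prob(|B(0,r)|\le\eps r^2,\,0\lr x)\le C\cdot(2\eps r)\cdot r\cdot |x|^{2-d}$ — a factor $|\partial B(0,k)|\le 2\eps r$ per location, times $\asymp r$ locations — overshooting by $r^2$. Packaging the first thin sphere as a stopping time eliminates the sum over locations, and the one--arm bound $\prob(H(\lceil r/2\rceil))\le C/r$ is precisely what converts the surviving $2\eps r$ into $\eps$. Note that only \eqref{tpt} and part \eqref{enu:Gam<} of Theorem~\ref{exp} enter — not the volume bound \eqref{enu:EB0r}, and no second--moment estimate.
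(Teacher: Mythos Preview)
Your proof is correct and follows essentially the same approach as the paper: find the first thin level $j\in[r/2,r]$ with $|\partial B(0,j)|\le 2\eps r$ by pigeonhole, condition on the exploration up to that level (the paper conditions on $\{B(0,j)=A\}$ rather than using the stopping-time/filtration language, but this is the same thing), bound the conditional probability of $\{0\lr x\}$ by $C|\partial B(0,j)|\,|x|^{2-d}$ via \eqref{tpt}, and then use part~(\ref{enu:Gam<}) of Theorem~\ref{exp} to turn the remaining factor $2\eps r$ into $C\eps$. Your commentary about why a naive union bound over levels would overshoot is accurate, and the remedy you use is exactly the paper's.
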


\begin{proof}
If $|\bcr(0,r)| \leq \eps r^2$ then there must exists a (random)
level $j\in [r/2,r]$ in which $|\partial \bcr(0,j)| \leq 2\eps r$.
Fix the smallest such $j$. Now, if $0\lr x$ then there must be
some vertex $y\in\partial \bcr(0,j)$ which is connected to $x$
``off $\bcr(0,j-1)$'' i.e.~with a path that does not use any of
the vertices in $\bcr(0,j-1)$. Let therefore $A$ be some subgraph
of $\Z^d$ which is ``admissible'' for being $\bcr(0,j)$
i.e.~$\prob(\bcr(0,j)=A)>0$. We get
$$
\prob(0\lr x\;|\;\bcr(0,j)=A)\leq \sum_{y\in \partial A}
  \prob(y\lr x \mbox{ off }A\setminus \partial A\;|\;\bcr(0,j)=A)
$$
where $\partial A$ stands for the vertices in the graph $A$
furthest from $0$. A moment's reflection shows that, for any $A$ and any
$y\in\partial A$, the event
$\{y\lr x$ off $A\setminus \partial A\}$ is {\em independent} of
the event $\{\bcr(0,j)=A\}$. Therefore
we can write
$$
\prob(0\lr x\;|\;\bcr(0,j)=A)\leq \sum_{y\in \partial A}
  \prob(y\lr x \mbox{ off }A\setminus \partial A)
\leq\sum_{y\in \partial A}\prob(y \lr x)\leq C|\partial A||x|^{2-d}
$$
where the last inequality uses the two-point function estimate
(\ref{tpt}) and the fact that $|x-y|\geq |x|/2$. By the definition
of $j$ we have $|\partial A|\leq 2\eps r$ and summing over all
admissible $A$ gives
$$
\prob(|\bcr(0,r)|\leq\eps r^2, 0\lr x)\leq  C\eps
r|x|^{2-d} \cdot \sum_{A} \prob (\bcr(0,j)=A).
$$
However, the events $\bcr(0,j)=A_1$ and
$\bcr(0,j)=A_2$ are disjoint and the union of these over
all $A$ imply that $\partial\bcr(0,\frac{1}{2}r)\neq\emptyset$. Part
(\ref{enu:Gam<}) of Theorem \ref{exp} shows that the probability of this union
is $\leq C/r$, finishing our lemma.
\end{proof}

We continue with some effective resistance estimates. Recall the
following definitions from \cite{NP3}.

\begin{itemize}
\item An edge $e$  between $\partial \bcr(0,j-1)$ and
$\partial \bcr(0,j)$ is called a {\bf lane} for $r$ if it
there is a path with initial edge $e$ from
  $\partial \bcr(0,j-1)$ to  $\partial \bcr(0,r)$
that does not return to $\partial \bcr(0,j-1)$. \item Say that a
level $j$ (with $0<j<r$) has $\lambda$ {\bf lanes} for $r$ if
there are at least $\lambda$ edges between $\partial \bcr(0,j-1)$
and $\partial \bcr(0,j)$ which are lanes for $r$.

\item We say that $0$ is $\lambda$-{\bf lane rich} for $r$, if
more than half of the levels $j\in [r/4,r/2]$ have $\lambda$
lanes for $r$.
\end{itemize}

\noindent Recall also the Nash-Williams \cite{NW} inequality (see
also \cite[Corollary 9.2]{P}).
\begin{lemma} [\cite{NW}] \label{NW} If $\{\Pi_j\}_{j=1}^J$ are
disjoint cut-sets separating $v$ from $U$ in a graph with unit
conductance for each edge, then the effective resistance from $v$
to $U$ satisfies
$$ R_{{\rm eff}} (v \lr U) \geq \sum_{j=1}^J {1 \over |\Pi_j|} \, .$$
\end{lemma}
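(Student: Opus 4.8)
The plan is to derive Lemma~\ref{NW} from Thomson's principle, the variational characterization of effective resistance by flows: with unit conductance on every edge, $R_{{\rm eff}}(v\lr U)=\inf_\theta \mathcal{E}(\theta)$, where the infimum runs over all unit flows $\theta$ from $v$ to $U$ and $\mathcal{E}(\theta)=\sum_e\theta(e)^2$ denotes the energy. Hence it suffices to show that \emph{every} unit flow from $v$ to $U$ has energy at least $\sum_{j=1}^J|\Pi_j|^{-1}$.

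Fix such a flow $\theta$, and fix one of the cut-sets, $\Pi_j$. By definition, deleting the edges of $\Pi_j$ disconnects $v$ from $U$; let $S_j$ be the set of vertices in the connected component(s) of $v$ after this deletion (so $S_j$ contains no vertex of $U$, and every edge with exactly one endpoint in $S_j$ lies in $\Pi_j$). Summing the conservation law ``total signed flow out of $x$ equals $0$'' over all $x\in S_j\setminus\{v\}$, together with the fact that the signed flow out of $v$ equals $1$, every edge internal to $S_j$ cancels and one is left with: the net flux of $\theta$ from $S_j$ to its complement equals $1$. In particular $\sum_{e\in\Pi_j}|\theta(e)|\ge 1$, so by Cauchy--Schwarz
$$\sum_{e\in\Pi_j}\theta(e)^2\ \ge\ \frac{1}{|\Pi_j|}\Big(\sum_{e\in\Pi_j}|\theta(e)|\Big)^{2}\ \ge\ \frac{1}{|\Pi_j|}\,.$$
Since $\Pi_1,\dots,\Pi_J$ are pairwise disjoint, adding these up and discarding the remaining nonnegative terms gives $\mathcal{E}(\theta)\ge\sum_{j=1}^J|\Pi_j|^{-1}$; taking the infimum over $\theta$ completes the argument.

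The only step requiring care is the flux identity --- that ``$\Pi_j$ separates $v$ from $U$'' yields a vertex partition with $\Pi_j$ as (a superset of) its edge boundary, and then the one-line telescoping of the conservation equations. In the use made of the lemma this is immediate, since the cut-sets are the level sets $\Pi_j=\{$edges between $\partial B(0,j-1)$ and $\partial B(0,j)\}$ and $S_j=B(0,j-1)$. There is also a slicker variant, applicable when the cut-sets are consecutive as here: collapse each level $\partial B(0,j)$ to a single vertex --- an identification of vertices, which by Rayleigh monotonicity does not increase effective resistance --- turning the network into $J$ bundles of $|\Pi_j|$ parallel unit resistors in series, with total resistance exactly $\sum_j|\Pi_j|^{-1}$. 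I would present the flow argument as the main proof, since it needs only disjointness of the $\Pi_j$, and mention the collapsing argument as a remark.
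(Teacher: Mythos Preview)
Your proof is correct and is the standard argument via Thomson's principle and Cauchy--Schwarz. The paper does not prove this lemma at all --- it is simply quoted from Nash--Williams \cite{NW} with a pointer to \cite[Corollary~9.2]{P} --- so there is no in-paper proof to compare against; what you wrote is essentially the proof one finds in those references.
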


\begin{lemma}\label{eventrx} For any lattice $\Z^d$ with $d>6$ satisfying
(\ref{tpt}), there exists a constant $C>0$ such that for any $r\geq 1$, for any
event $E$ measurable with respect to $\bcr(0,r)$ and for
any $x\in\Z^d$ with $|x|$ sufficiently large,
$$
\prob(E\cap \{0\lr x\})\leq C\sqrt{r\prob(E)}\,|x|^{2-d}.
$$
\end{lemma}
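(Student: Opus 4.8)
The plan is to condition on the intrinsic ball $\bcr(0,r)$, reduce the inequality to a second--moment estimate for the intrinsic sphere, and then apply Cauchy--Schwarz. \emph{Step 1: conditioning on $\bcr(0,r)$.} Let $A$ be admissible, i.e.\ $\prob(\bcr(0,r)=A)>0$. Since $|x|$ is large we have $x\notin\bcr(0,r)$, so on $\{0\lr x\}$, looking at the last vertex of a connecting open path that lies in $\bcr(0,r-1)$, there is a vertex $v\in\partial\bcr(0,r)$ joined to $x$ by an open path avoiding $\bcr(0,r-1)$. Exactly as in the proof of Lemma~\ref{volupper}, this event depends only on edges having both endpoints outside $\bcr(0,r-1)$, whereas $\{\bcr(0,r)=A\}$ depends only on edges incident to $\bcr(0,r-1)=A\setminus\partial A$, so the two are independent; hence, by a union bound over $v\in\partial A$, the estimate (\ref{tpt}), and $|x-v|\ge|x|/2$,
$$ \prob\big(0\lr x\mid\bcr(0,r)=A\big)\ \le\ \sum_{v\in\partial A}\prob(v\lr x)\ \le\ C\,|\partial A|\,|x|^{2-d}. $$
Since $E$ is measurable with respect to $\bcr(0,r)$, averaging over $A$ gives
$$ \prob\big(E\cap\{0\lr x\}\big)\ =\ \E\big[{\bf 1}_E\,\prob(0\lr x\mid\bcr(0,r))\big]\ \le\ C\,|x|^{2-d}\,\E\big[|\partial\bcr(0,r)|\,{\bf 1}_E\big]. $$

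\emph{Step 2: Cauchy--Schwarz and the key estimate.} By Cauchy--Schwarz, $\E[|\partial\bcr(0,r)|\,{\bf 1}_E]\le(\E|\partial\bcr(0,r)|^2)^{1/2}\,\prob(E)^{1/2}$, so it remains to prove the sphere second--moment bound $\E|\partial\bcr(0,r)|^2\le Cr$. This is where I expect the real work to lie. Expanding $\E|\partial\bcr(0,r)|^2=\sum_{u,v}\prob(u,v\in\partial\bcr(0,r))$ and, for $u,v\in\partial\bcr(0,r)$, taking geodesics from $0$ to $u$ and to $v$ with last common vertex $w$ at intrinsic level $m$, one obtains three pairwise edge-disjoint open connections of lengths $m$, $r-m$, $r-m$ joining $0$ to $w$, $w$ to $u$, $w$ to $v$. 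A direct BK bound is too weak here: estimating $\sum_u\prob(\text{open path from }w\text{ to }u\text{ of length}\le r-m)$ by $\E|\bcr(w,r-m)|\le C(r-m)$ and summing over $m$ yields only order $r^4$ (and, ignoring the refined lengths, the crude $\E|\partial\bcr(0,r)|^2\le\E|\bcr(0,r)|^2\le Cr^3$). To reach the correct order $r$ one must use that the two branches issuing from $w$ are fresh critical clusters whose expected sphere size is $O(1)$; that is, one needs the companion estimate $\E|\partial\bcr(0,s)|\le C$ uniform in $s$, proved jointly with the second moment --- for instance by a dyadic regeneration: conditioning on $\bcr(0,r/2)$ with boundary $\{w_1,\dots,w_k\}$ gives $|\partial\bcr(0,r)|\le\sum_{i=1}^{k}Y_i$ with each $Y_i$ stochastically dominated by a translate of $|\partial\bcr(0,r/2)|$, and one then closes a recursion for $f(r)=\E|\partial\bcr(0,r)|^2$ using $\E|\bcr(0,r)|\le Cr$ (Theorem~\ref{exp}) together with $\prob(H(r))\le C/r$. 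Independently, $\E|\partial\bcr(0,s)|\le C$ can be obtained by comparing $\E|\bcr(0,2s)|-\E|\bcr(0,s)|$ with the contribution of the fresh clusters hanging off $\partial\bcr(0,s)$, each of expected size $\gtrsim s$ and essentially disjoint by the triangle condition.

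\emph{Step 3: conclusion.} Combining Steps~1 and~2 with $\E|\partial\bcr(0,r)|^2\le Cr$ gives $\prob(E\cap\{0\lr x\})\le C|x|^{2-d}(Cr)^{1/2}\prob(E)^{1/2}=C'\sqrt{r\,\prob(E)}\,|x|^{2-d}$, as claimed. Note that the hypothesis (\ref{tpt}) enters only through the union bound in Step~1; the second--moment input uses only Theorem~\ref{exp} and standard (inverse--)BK manipulations.
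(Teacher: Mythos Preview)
Your Step~1 is correct and is exactly the conditioning device the paper uses. The gap is entirely in Step~2: you reduce the lemma to the bound $\E|\partial\bcr(0,r)|^2\le Cr$, acknowledge this is ``where the real work lies,'' and then offer only a sketch. That sketch does not go through as stated. The regeneration step ``each $Y_i$ stochastically dominated by a translate of $|\partial\bcr(0,r/2)|$'' fails precisely because $|\partial\bcr(0,s;G)|$ is \emph{not} monotone in $G$: removing edges can push vertices from distance $<s$ out to distance exactly $s$, enlarging the sphere. This is exactly the non-monotonicity obstruction the paper flags in \S1.4 as the reason for working with $\Gamma$ rather than $H$. Even granting some surrogate domination, the $Y_i$ are not independent of one another (they share the same complement graph), so controlling $\E(\sum Y_i)^2$ still requires cross terms you have not bounded. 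The companion estimate $\E|\partial\bcr(0,s)|\le C$ is likewise not available from Theorem~\ref{exp} for a \emph{fixed} level $s$; Theorem~\ref{exp} only gives it on average over $s\le r$.

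The paper avoids the second moment entirely, and the trick is worth internalising. From Lemma~\ref{volupper} one has $\E\big[|\bcr(0,2r)|\,{\bf 1}_{\{0\lr x\}}\big]\le Cr^2|x|^{2-d}$, so by pigeonhole some level $j\in[r,2r]$ satisfies $\E\big[|\partial\bcr(0,j)|\,{\bf 1}_{\{0\lr x\}}\big]\le Cr|x|^{2-d}$. Since $E$ is $\bcr(0,r)$-measurable it is also $\bcr(0,j)$-measurable. Now introduce a threshold $M$ and split: on $\{|\partial\bcr(0,j)|>M\}$ apply Markov to the displayed first moment to get $Cr|x|^{2-d}/M$; on $\{|\partial\bcr(0,j)|\le M\}$ your own Step~1 argument (applied at level $j$) gives $CM|x|^{2-d}\prob(E)$. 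Optimising $M=\sqrt{r/\prob(E)}$ finishes. The point is that the \emph{joint} first moment $\E[|\partial\bcr(0,j)|{\bf 1}_{\{0\lr x\}}]$---cheap via Lemma~\ref{volupper}---already encodes the correlation you were trying to extract with Cauchy--Schwarz, and the freedom to choose $j$ by pigeonhole replaces any need to control a fixed sphere.
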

\begin{proof} We first note that by Lemma \ref{volupper} there exists
some $j \in [r,2r]$ such that \be\label{boundedlevel} \E \big [
|\partial \bcr(0,j)|\cdot {\bf 1} _{\{ 0 \lr x \} } \big ] \leq C
r |x|^{2-d} \, . \ee Now fix some $M>0$ (which we shall optimize
in the end) and write
\begin{multline*}
\prob(E\cap \{0\lr x\}) \leq \prob(|\partial \bcr(0,j)|>M,\,0\lr
x) + \prob(E\cap \{|\partial \bcr(0,j)|\leq M, 0\lr x\}).
\end{multline*}
Now, for the first term we use (\ref{boundedlevel}) and Markov's
inequality and get
$$
\prob(|\partial \bcr(0,j)|>M,\,0\lr x) \leq
\frac{Cr|x|^{2-d}}{M}\,.
$$
For the second term we do as in Lemma \ref{vollower}. We condition
over $\bcr(0,j)$ and note that for any $A$ we have
$$
\prob(0\lr x\;|\;\bcr(0,j)=A)\leq C|\partial A||x|^{2-d}\leq
CM|x|^{2-d}\,.
$$
Summing over all subgraphs $A$ which satisfy $E$ (here is where we
use that $E$ is measurable with respect to $\bcr(0,r)$)
gives that the second term is $\leq\prob(E)\cdot CM|x|^{2-d}$.
Summing both terms we get
$$
\prob(E\cap \{0\lr x\}) \leq \frac{Cr|x|^{2-d}}{M}+C\prob(E)M|x|^{2-d}\,.
$$
Taking $M=\sqrt{r/\prob(E)}$ proves the lemma.\end{proof}

\begin{lemma} \label{effres} For any lattice $\Z^d$ with $d>6$ satisfying
(\ref{tpt}), there exists a constant $C>0$ such for any $r\geq 1$, any
$\lambda>1$ and any $x\in \Z^d$ with $|x|$ sufficiently large we have that
$$ \prob \Big ( R_{{\rm eff}} (0, \partial \bcr(0,r)) \leq \lambda^{-1} r \, , \, 0 \lr x
\Big ) \leq C \lambda^{-1/2} |x|^{2-d} \, .$$
\end{lemma}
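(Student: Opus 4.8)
\emph{Overall strategy.} The plan is to bound the effective resistance from below via the Nash--Williams inequality applied to the ``lanes'' defined above, and then to feed the resulting event — which will turn out to be measurable with respect to $\bcr(0,r)$ and to have probability $O(1/(\mu r))$ — into Lemma~\ref{eventrx}, using the volume and one-arm bounds of Theorem~\ref{exp}.

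\emph{Reduction to a lane count.} For each $j\le r$, the edges between $\partial\bcr(0,j-1)$ and $\partial\bcr(0,j)$ that are lanes for $r$ form a cut-set separating $0$ from $\partial\bcr(0,r)$ inside the finite graph $\bcr(0,r)$: any path from $0$ to $\partial\bcr(0,r)$ in $\bcr(0,r)$, followed up to its last visit of level $j-1$, must leave towards level $j$ along such a lane (it cannot descend to level $j-2$ without revisiting level $j-1$, since intrinsic distances change by at most $1$ along an edge). Writing $N_j$ for the number of lanes at level $j$, these cut-sets are pairwise edge-disjoint, so Lemma~\ref{NW} yields $R_{\rm eff}(0,\partial\bcr(0,r))\ge\sum_{r/4\le j\le r/2}N_j^{-1}$. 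In particular $\{R_{\rm eff}(0,\partial\bcr(0,r))\le\lambda^{-1}r\}$ forces $\partial\bcr(0,r)\ne\emptyset$ (otherwise the resistance is infinite), and then $N_j\ge1$ for all $j\le r$; moreover $\sum_{r/4\le j\le r/2}N_j^{-1}\le r/\lambda$, so a trivial counting argument shows that more than half of the levels $j\in[r/4,r/2]$ satisfy $N_j\ge c\lambda$, i.e.\ $0$ is $c\lambda$-lane rich for $r$. Thus, writing $\mu:=c\lambda$ and $E:=\{0\text{ is }\mu\text{-lane rich for }r\}$, it suffices to prove $\prob(E\cap\{0\lr x\})\le C\mu^{-1/2}|x|^{2-d}$.

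\emph{$E$ is $\bcr(0,r)$-measurable and $\prob(E)\le C/(\mu r)$.} A path witnessing a lane at level $j\le r$ may be truncated at its first vertex at distance exactly $r$, hence taken inside $\bcr(0,r)$; so whether an edge is a lane for $r$, and hence $E$, is determined by $\bcr(0,r)$. For the probability bound I will show $\E\big[\sum_{r/4\le j\le r/2}N_j\big]=O(1)$; since on $E$ one has $\sum_{r/4\le j\le r/2}N_j\ge c\mu r$, Markov's inequality then gives $\prob(E)\le C/(\mu r)$. Fix $j\in[r/4,r/2]$ and condition on $\bcr(0,j-1)$ together with the status of every edge incident to $\partial\bcr(0,j-1)$; this determines $\partial\bcr(0,j)$ and the open edges between $\partial\bcr(0,j-1)$ and $\partial\bcr(0,j)$, of which there are at most $D\,|\partial\bcr(0,j-1)|$ ($D=$ maximal degree). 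For each such edge, the extra requirement making it a lane — a path from its far endpoint to $\partial\bcr(0,r)$ avoiding $\partial\bcr(0,j-1)$, hence (again by the $\pm1$ property) avoiding all of $\bcr(0,j-1)$ — uses only edges not incident to $\bcr(0,j-1)$ and forces the cluster of that endpoint in $\Z^d\setminus\bcr(0,j-1)$ to have intrinsic radius at least $r-j\ge r/2$; conditionally this has probability at most $\Gamma(r-j)\le\Gamma(r/2)$. Hence $\E[N_j]\le D\,\Gamma(r/2)\,\E|\partial\bcr(0,j-1)|$, and summing,
\[
\E\Big[\sum_{r/4\le j\le r/2}N_j\Big]\ \le\ D\,\Gamma(r/2)\,\E|\bcr(0,r/2)|\ \le\ D\cdot\frac{2C}{r}\cdot\frac{Cr}{2}\ =\ O(1),
\]
using parts (ii) and (i) of Theorem~\ref{exp}. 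Applying Lemma~\ref{eventrx} to $E$ now gives $\prob(E\cap\{0\lr x\})\le C\sqrt{r\,\prob(E)}\,|x|^{2-d}\le C\sqrt{C/\mu}\,|x|^{2-d}$, and substituting $\mu=c\lambda$ yields the claimed bound $C\lambda^{-1/2}|x|^{2-d}$ (for $\lambda$ bounded the statement is trivial, the left side being at most $\prob(0\lr x)\approx|x|^{2-d}$).

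\emph{Main obstacle.} The crux is the estimate $\E[\sum_j N_j]=O(1)$ — and crucially $O(1)$, not $O(r)$: it is the extra factor $r^{-1}$ supplied by $\Gamma(r/2)\le C/r$ (morally, the cost of the cluster reaching level $r$ at all) that upgrades the naive bound $\prob(E)\le C/\mu$, obtainable from the volume bound alone, to the $C/(\mu r)$ needed to survive the square root in Lemma~\ref{eventrx}. Making this rigorous requires a careful exploration set-up so that the ``forward'' half of a lane is genuinely independent of the explored region and governed by $\Gamma$; the bookkeeping that this forward path avoids \emph{all} of $\bcr(0,j-1)$, not just its boundary, again rests on intrinsic distances changing by at most one along each edge.
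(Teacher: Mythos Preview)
Your proof is correct and follows essentially the same route as the paper: reduce $\{R_{\rm eff}\le \lambda^{-1}r\}$ to $\lambda$-lane richness via Nash--Williams, bound the expected total number of lanes in $[r/4,r/2]$ by $O(1)$ using $\Gamma(r/2)\le C/r$ together with $\E|\bcr(0,r)|\le Cr$, apply Markov to get $\prob(\text{lane rich})\le C/(\lambda r)$, and finish with Lemma~\ref{eventrx}. The only cosmetic differences are that the paper conditions on $\bcr(0,j)$ (rather than on $\bcr(0,j-1)$ plus boundary edges) and bounds the number of candidate edges via $|\partial\bcr(0,j)|$ rather than $D\,|\partial\bcr(0,j-1)|$; your bookkeeping on measurability and on why the forward path avoids all of $\bcr(0,j-1)$ is, if anything, more explicit than the paper's.
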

\begin{proof} Let $j \in [r/4,r/2]$ and denote by $L_j$ the number of
lanes between $\partial \bcr(0,j-1)$ and $\partial\bcr(0,j)$. Let
us condition on $\bcr(0,j)$ and take some edge between $\partial
\bcr(0,j-1)$ and $\partial \bcr(0,j)$. Denote the end vertex of
this edge in $\partial \bcr(0,j)$ by $v$. The event that the edge
is a lane for $r$ implies that we have $\partial B(v,r/2;
G)\ne\emptyset$ in the graph $G$ that one gets by removing all
edges which are needed to calculate $\bcr(0,j)$, that is, all the
edges with at least one vertex in $\bcr(0,j-1)$. Thus, conditioned
on $\bcr(0,j)$, the event we are interested in is
$$
\partial B(v,r/2; G)\neq\emptyset \, ,
$$
By the definition of $\Gamma$ (with the $G$ from the definition of
$\Gamma$ being our $G$) and translation invariance we get that
$$
\prob(\partial B(v,r/2; G) \neq\emptyset \;\mid
\;\bcr(0,j))\leq\Gamma(r/2)
$$
which by part (\ref{enu:Gam<}) of Theorem \ref{exp} is $\leq C/r$. In total we
get
%
$$
\E [ L_j \mid \bcr(0,j)] \leq {C \over r} |\partial \bcr(0,j)|
\, .
$$  This together with part (\ref{enu:EB0r}) of Theorem
\ref{exp} implies that the expected number of lanes in $\bcr(0,r/2)
\setminus \bcr(0,r/4)$ is at most a constant $C$. We learn
that the probability that $0$ is $\lambda$-lane rich is at most $C
\lambda ^{-1} r^{-1}$. Observe that Lemma \ref{NW} implies that if
$0$ is {\em not} $\lambda$-lane rich, then $R_{{\rm eff}} (0,
\partial \bcr(0,r)) \geq c \lambda^{-1} r$ and hence
$$ \prob \Big ( R_{{\rm eff}} (0, \partial \bcr(0,r)) \leq \lambda^{-1}
r\Big ) \leq C \lambda ^{-1} r^{-1} $$ (where if $\partial
\bcr(0,r)=\emptyset$ then we consider the resistance to be
$\infty$). An appeal to Lemma \ref{eventrx} and we are done.
\end{proof}



\noindent {\bf Proof of Theorem \ref{mainthm}.}
By Theorem \ref{RW} it suffices to show that (\ref{maincond})
holds. Indeed, fix $r\geq 1$ and $x \in \Z^d$ with $|x|$ sufficiently large.
Markov's inequality with Lemma \ref{volupper} and the lower bound
for the two-point function (\ref{tpt}) shows that
$$ \prob \big ( |\bcr(0,r)| \geq \lambda r^2 \, \mid \, 0 \lr x
\big ) \leq C\lambda ^{-1} \, .$$ Lemmas \ref{vollower} and
\ref{effres} show that
$$ \prob \big ( |\bcr(0,r)| \leq \lambda^{-1} r^2 \, \mid \, 0 \lr x
\big ) \leq C\lambda ^{-1} \, ,$$ and
$$ \prob \big (  R_{{\rm eff}} (0, \partial \bcr(0,r)) \leq \lambda^{-1} r \, \mid \, 0 \lr x
\big ) \leq C\lambda ^{-1/2} \, .$$ Taking the limit as $|x| \to
\infty$ shows that (\ref{maincond}) holds with constants $K=3C$ and $q=1/2$, concluding the proof. \qed

\section{{\bf  Intrinsic Metric Critical Exponents}}\label{sec:exp}

\noindent In this section we prove Theorem \ref{exp}. Our assumption
on the lattice is therefore that it satisfies the triangle condition
(\ref{triangle}). In effect we will be using a variation known as the
{\em open} triangle condition
\be\label{opentriangle}
\lim _{K \to \infty} \sup_{w : |w| \geq K} \prob(0\lr x)\prob(x\lr y)\prob(y\lr w) = 0 \, .
\ee
For $\Z^d$ the triangle condition implies the open triangle condition --- see
\cite[lemma 2.1]{BA}. Hence, from now on we will only use (\ref{opentriangle}).

\subsection{Intrinsic metric volume exponent}\label{sub:G} Here we
prove part (\ref{enu:EB0r}) of Theorem \ref{exp}. We use the notation
$$ G(r) = \E |\bcr(0,r)| \, .$$
The main part of the proof is the following Lemma.
\begin{lemma} \label{volrecur} Under the setting of Theorem \ref{exp}, there exists a constant
$c_1>0$ such that for all $r$
$$ G(2r) \geq {c_1 G(r)^2 \over r} \, .$$
\end{lemma}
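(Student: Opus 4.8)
The idea is to build $\bcr(0,2r)$ from two concatenated pieces of length $r$ and to count the admissible ``midpoints'' of each site. For $z\in\Z^d$ put
$$ W_z=\#\bigl\{y\in\Z^d:\ \{0\lrr y\}\circ\{y\lrr z\}\bigr\}. $$
Concatenating an open path of length $\le r$ from $0$ to $y$ with an edge-disjoint open path of length $\le r$ from $y$ to $z$ gives an open path of length $\le 2r$ from $0$ to $z$, so $\{W_z\ge1\}\subseteq\{0\lrrtwo z\}$. Hence, by Paley--Zygmund ($\prob(W_z\ge1)\ge(\E W_z)^2/\E[W_z^2]$) and then Cauchy--Schwarz in $z$,
$$ G(2r)=\sum_z\prob(0\lrrtwo z)\ \ge\ \sum_z\prob(W_z\ge1)\ \ge\ \sum_z\frac{(\E W_z)^2}{\E[W_z^2]}\ \ge\ \frac{\bigl(\sum_z\E W_z\bigr)^2}{\sum_z\E[W_z^2]}. $$
It thus suffices to prove
$$ \text{(a)}\quad \sum_z\E W_z\ \ge\ c\,G(r)^2, \qquad\qquad \text{(b)}\quad \sum_z\E[W_z^2]\ \le\ C\,r\,G(r)^2, $$
since then $G(2r)\ge(c^2/C)\,G(r)^2/r$, which is the lemma (the finitely many small $r$ are absorbed into $c_1$ using $G(2r)>0$ and $G(r)<\infty$).

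By BK and translation invariance $\sum_z\E W_z=\sum_{y,z}\prob(\{0\lrr y\}\circ\{y\lrr z\})\le\sum_{y,z}\prob(0\lrr y)\prob(y\lrr z)=G(r)^2$, and (a) is the matching lower bound; this is where ``inverse BK'' and the open triangle condition (\ref{opentriangle}) enter. Concretely, split $\#\{(y,z):\{0\lrr y\}\circ\{y\lrr z\}\}$ by the level $j=d(0,y)$: a geodesic from $0$ to $y\in\partial\bcr(0,j)$ uses only edges meeting $\bcr(0,j-1)$, hence is disjoint from any path realising $\{y\lrr z\ \mbox{off}\ \bcr(0,j-1)\}$, so
$$ \sum_z\E W_z\ \ge\ \sum_{j=1}^r\E\Bigl[\sum_{y\in\partial\bcr(0,j)}\#\{z:\ y\lrr z\ \mbox{off}\ \bcr(0,j-1)\}\Bigr]. $$
Given $\bcr(0,j)$ the inner continuation is independent of $\bcr(0,j)$ (exactly as in the proof of Lemma~\ref{vollower}), and an inverse-BK estimate --- deleting the ``thin'' set $\bcr(0,j-1)$ costs only a constant factor, since a connection forced back through it is a long loop whose probability is controlled by the open triangle condition --- shows its conditional expectation is $\ge c\,G(r)$. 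Summing, $\sum_z\E W_z\ge c\,G(r)\sum_{j=1}^r\E|\partial\bcr(0,j)|=c\,G(r)(G(r)-1)\ge c'\,G(r)^2$.

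For (b), expand $\sum_z\E[W_z^2]=\sum_z\sum_{y_1,y_2}\prob(\{0\lrr y_1\}\circ\{y_1\lrr z\}\cap\{0\lrr y_2\}\circ\{y_2\lrr z\})$. Each admissible midpoint $y$ of $z$ lies on an open $0$--$z$ path of length $\le2r$, so morally $W_z=O(r)$; rigorously one peels off, by repeated BK, the vertex where the two $0$--$y_i$ connections branch and the vertex where the two $y_i$--$z$ connections merge, sums the free endpoint $z$ against $\sum_z\prob(a\lrr z)=G(r)$, and bounds the remaining spatial sums by the (open) triangle diagram; a single factor $r$ survives, from the sum over the level at which the two routes split. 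This square-diagram estimate is the technical heart. It is transparent on an infinite regular tree: there $\{0\lrr y\}\cap\{y\lrr z\}$ occurs disjointly iff $y$ lies on the unique $0$--$z$ geodesic, the admissible $y$ form an interval of levels, $W_z\le r+1$, whence $\sum_z\E[W_z^2]\le(r+1)\sum_z\E W_z\le(r+1)G(r)^2$; moreover $\sum_z\E W_z=G(r)^2$ exactly there, so (a) and (b) are both tight and $G(2r)\gtrsim G(r)^2/r$ has the correct order.

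The main obstacle is (b) --- the square-diagram bound ruling out more than $O(r)$ midpoints --- together with the inverse-BK step inside (a); these are the only places where the strength of the triangle condition is really needed, the remainder being soft.
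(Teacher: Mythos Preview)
Your reduction to (a) and (b) is clean, and (a)+(b) would indeed give the lemma. However, both steps are left as sketches, and your sketch for (a) has a genuine gap, while (b) --- which you yourself call ``the technical heart'' --- is not carried out.

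\smallskip
\noindent\textbf{The gap in (a).} You condition on $B(0,j)$ and assert that the conditional expectation of $\#\{z:y\lrr z\ \text{off}\ B(0,j-1)\}$ is $\ge cG(r)$, because ``a connection forced back through $B(0,j-1)$ is a long loop controlled by the open triangle''. But $y$ lies \emph{on the boundary} of $B(0,j-1)$, so a path from $y$ forced through $B(0,j-1)$ need not close any long loop at all; moreover $B(0,j-1)$ is random, unbounded, and positively correlated with $|\partial B(0,j)|$, so one cannot uniformly discount it. The inequality $\sum_z\E W_z\ge cG(r)^2$ is correct --- it is exactly Lemma~\ref{revBK} in the paper --- but the paper proves it by a completely different device: fix two deterministic points $u,v$ with $|u-v|\ge K$, write $\prob(\{u\lrrell x\}\circ\{v\lrrell y\})\ge\prob(u\lrrell x)\prob(v\lrrell y)-(\text{error})$ by inclusion--exclusion conditioned on $\C(u)$, and observe that the error summed over $x,y$ is $G(\ell)^2$ times the open triangle diagram from $u$ to $v$, hence $\le\tfrac12 G(\ell)^2$ for $K$ large. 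A local modification inside a ball of radius $K$ then trades $u,v$ for $0$.

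\smallskip
\noindent\textbf{On (b) and how the paper avoids it.} You do not prove (b), and the diagrammatic bound needed (two midpoints $y_1,y_2$ for the same $z$, with branch and merge vertices, several cases for their order along the backbone) is not obviously $\le Cr\,G(r)^2$ under the triangle condition alone; a naive BK expansion gives too many factors of $G(r)$. The paper sidesteps (b) entirely with a \emph{first-moment} argument. Call a midpoint $y$ of $z$ \emph{$K$-over-counted} if there exist $u,v$ with $|u-y|,|v-y|\ge K$ and $\{0\lrr u\}\circ\{v\lr y\}\circ\{y\lr u\}\circ\{u\lr v\}\circ\{v\lrr z\}$. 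A short deterministic argument shows that every \emph{non}-over-counted $y$ must lie within distance $K$ of any fixed open $0$--$z$ path of length $\le 2r$, so the number of such $y$ is $\le CK^d r$; hence $\sum_z\E W_z^{\text{good}}\le CK^d r\,G(2r)$. On the other hand $\sum_z\E W_z^{\text{bad}}\le G(r)^2\cdot(\text{open triangle})$, which is $\le\tfrac12 cG(r)^2$ for $K$ large. Combined with (a) this gives $cG(r)^2\le \sum_z\E W_z\le CK^d r\,G(2r)+\tfrac12 cG(r)^2$ directly --- no Paley--Zygmund, no Cauchy--Schwarz, no second moment. This is strictly less work than your route, and uses only the first moment of $W_z$.
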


\noindent Let us first see how to use Lemma \ref{volrecur}.

\noindent {\bf Proof of part (\ref{enu:EB0r}) of Theorem
\ref{exp}.} Assume without loss of generality that $c_1<1$ in
Lemma \ref{volrecur} and take $C_1 > \max\{(2/c_1), 2^d\}$. Assume
by contradiction that there exists $r_0$ such that $G(r_0) \geq
C_1 r_0$. Under this assumption we prove by induction that for any
integer $k \geq 0$ we have $G(2^k r_0) \geq C_1^{k+1}r_0$. The
case $k=0$ is our assumption, and Lemma \ref{volrecur} gives that
$$ G(2^{k+1} r_0) \geq {c_1 G (2^k r_0)^2 \over 2^k r_0} \geq C_1^{k+2} r_0 \,
,$$ where in the last inequality we used the induction hypothesis
and the fact that $C_1 > 2/c_1$. This completes our induction.

Now, since the number of vertices of distance $r$ from the origin
is at most $Cr^d$ for some constant $C$ which depends on $d$ and on the
lattice, but not on $r$, we get that for any integer $k \geq 0$
$$ C(2^{k}r_0)^d \geq G(2^k r_0) \geq C_1^{k+1}r_0 \, ,$$
and since $C_1 > 2^d$ we arrive at a contradiction (for some $k$ sufficiently
large) which proves the upper bound on $G(r)$. \qed

\noindent The next lemma is used in the proof of Lemma
\ref{volrecur}.

\begin{lemma} \label{revBK} There exists some constant $c>0$ such that
$$ \sum _{x,y \in \Z^d} \prob \big ( \{0 \lrr x\} \circ \{x \lrr y\}
\big ) \geq cG(r)^2 \, .$$
\end{lemma}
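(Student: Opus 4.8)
The plan is to produce a ``reverse BK'' lower bound by the standard trick: exhibit, for each ordered pair of vertices $u,v$ with $u \in \bcr(0,r)$ and $v \in \bcr(u,r)$ (distances measured in the percolation configuration), a \emph{canonical} way to split the connections $0 \lrr u$ and $u \lrr v$ so that they use disjoint edge sets. If such a canonical decomposition can be defined deterministically from the configuration, then the event $\{0\lrr x\}\circ\{x\lrr y\}$ for the appropriate $x,y$ occurs, and summing gives a lower bound on $\sum_{x,y}\prob(\{0\lrr x\}\circ\{x\lrr y\})$ in terms of $\E[\,|\bcr(0,r)|\cdot(\text{something like }|\bcr(u,r)|)\,]$. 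The goal is to make the right-hand side factor into $G(r)^2$.

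Concretely, I would argue as follows. Fix the configuration and condition on $\bcr(0,r)$. For each $u\in\bcr(0,r)$, let $j=d(0,u)\le r$ and consider the graph $G'$ obtained by deleting every edge incident to a vertex of $\bcr(0,j-1)$ (exactly the edges ``used up'' in determining $\bcr(0,j)$, as in the proof of Lemma \ref{effres}); crucially, percolation on the remaining edges is \emph{independent} of $\bcr(0,j)$. Now count vertices $v$ with $d_{G'_{p_c}}(u,v)\le r-j$, say, or more simply $v$ with $d_{G'_{p_c}}(u,v)\le r/2$ when $j\le r/2$. Any such $v$ is connected to $u$ by a path avoiding $\bcr(0,j-1)$; concatenating with a geodesic from $0$ to $u$ inside $\bcr(0,j)$ realizes $\{0\lrr u\}\circ\{u\lrr v\}$ on disjoint edges (the geodesic uses only edges within $\bcr(0,j)$, the second path uses only edges of $G'$, and these edge sets are disjoint since $G'$ omits all edges touching $\bcr(0,j-1)$). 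Restricting to $u$ with $j\in[r/4,r/2]$ (say), we have $r-j\ge r/2$, so by the definition of $G(r)$ and independence,
$$
\E\big[\,\#\{v : d_{G'_{p_c}}(u,v)\le r/2\}\;\big|\;\bcr(0,r)\big]\;\ge\; c\,G(r/2)\;\ge\;c\,G(r),
$$
using translation invariance and that $G$ is nondecreasing (if needed, absorb the $r/2$ into constants, or re-run with $\bcr(0,2r)$ in the statement of Lemma \ref{volrecur} so the factor of two is harmless). Meanwhile $\E\big[\#\{u\in\bcr(0,r):d(0,u)\in[r/4,r/2]\}\big]\ge c\,G(r)$ as well, since a constant fraction of the mass of $|\bcr(0,r)|$ lies at levels in $[r/4,r/2]$ — this itself follows because $|\bcr(0,r/4)|\le \frac14|\bcr(0,r)|$ cannot capture a constant fraction unless one argues more carefully; the cleaner route is just to take \emph{all} $u\in\bcr(0,r)$ with $d(0,u)\le r/2$ and bound below by $G(r/2)\ge cG(r)$, adjusting constants, again harmless if Lemma \ref{volrecur} is stated with a factor of two of slack. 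Combining the two expectations and summing over $x,y$ gives $\sum_{x,y}\prob(\{0\lrr x\}\circ\{x\lrr y\})\ge c\,G(r)^2$.

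The main obstacle is bookkeeping the disjointness and the radii: one must choose the level window for $u$ so that (a) the residual radius $r-j$ is still $\Theta(r)$, giving a factor $\Theta(G(r))$ from the second connection, and (b) the number of admissible $u$ is still $\Theta(G(r))$, and (c) the two edge sets are genuinely disjoint by construction rather than merely ``typically'' disjoint. The deletion-of-incident-edges device used in Lemma \ref{effres} and Lemma \ref{vollower} handles (c) cleanly; (a) and (b) force the loss of constant factors which is why Lemma \ref{volrecur} — and hence this lemma as it will be applied — is comfortable with $G(2r)$ rather than $G(r)$ on the left. No new percolation input beyond the BK-type conditioning arguments already used above is needed; in particular the open triangle condition is not required for this lemma (it will enter only in controlling error terms elsewhere in the proof of Lemma \ref{volrecur}).
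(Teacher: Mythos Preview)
Your argument has a genuine gap at the step where you assert
\[
\E\big[\,\#\{v : d_{G'_{p_c}}(u,v)\le r/2\}\;\big|\;\bcr(0,j)\big]\;\ge\; c\,G(r/2).
\]
Here $G'$ is a \emph{subgraph} of $\Z^d$ (you have deleted all edges incident to $\bcr(0,j-1)$), and balls in a subgraph are \emph{smaller}, not larger, than balls in $\Z^d$: if $v\in B(u,r/2;G')$ then certainly $v\in B(u,r/2;\Z^d)$, but not conversely. So the only automatic inequality is $\E|B(u,r/2;G')|\le G(r/2)$, which is the wrong direction. Neither ``the definition of $G(r)$'' nor ``independence'' gives a lower bound here; independence tells you the conditional law of percolation on $G'$ is fresh, but it is fresh percolation on a depleted graph, and nothing prevents the removed edges from killing a constant fraction (or more) of the ball around $u$. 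To salvage the argument you would have to bound the deficit $|B(u,r/2;\Z^d)|-|B(u,r/2;G')|$, i.e.\ the number of $v$ every short $u$--$v$ path of which must touch $\bcr(0,j-1)$. This is exactly an ``only on $A$'' event, and controlling its expectation is precisely where the open triangle condition enters.

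Indeed, contrary to your final remark, the paper's proof of this lemma uses the open triangle condition in an essential way. Their route is different from yours: by translation invariance they reduce to bounding $\sum_{x,y}\prob(\{u\lrrc x\}\circ\{v\lrrc y\})$ for two fixed starting points $u,v$ at Euclidean distance $\geq K$; they condition on $\C(u)$, write $\prob(v\lrrell y)=\prob(v\lrrell y\text{ off }\C(u))+\prob(v\lrrell y\text{ only on }\C(u))$, and bound the ``only on'' error by a triangle diagram, which the open triangle condition makes $\leq \tfrac12$ for $K$ large. A local modification in the ball of radius $K$ then joins $u,v$ back to $0$. Your layering picture is appealing, but without an analogous control on the deficit it does not close; and once you add that control, you are effectively reproducing the paper's argument.
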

\noindent {\bf Proof.} By translation invariance of $\Z^d$ it
suffices to prove that
$$ \sum _{x,y \in \Z^d} \prob \big ( \{0 \lrr x\} \circ \{0 \lrr y\}
\big ) \geq cG(r)^2 \, .$$ The proof requires that we separate
slightly the starting points of the two paths.
Hence we shall
prove that there exists some $K>0$ such that if $u,v \in \Z^d$
satisfy $|u-v|\geq K$, then \be\label{revBKstp0} \sum
_{x,y\in\Z^d} \prob \big (  \{u \lrrc x\} \circ \{v \lrrc y\} \big
) \geq {1 \over 2}G(r-K)^2 \, .\ee Assuming this, we will then
take $u,v$ to be antipodal vertices on the sphere of radius $K$
(in the usual norm) of $\Z^d$ so that $|u-v|\geq K$. To see that
the assertion of the lemma follows from the above claim, notice
that if it occurs that $\{u \lrrc x\} \circ \{v \lrrc y\}$, then
by changing the status of edges from closed to open only in the
ball $B(0,K)$ we can make the configuration belong to the event
$\{0 \lrr x\} \circ \{0 \lrr y\}$. We deduce that there exists
some $c(K)>0$ such that
$$ \prob \Big ( \{0 \lrr x\} \circ \{0 \lrr y\} \Big ) \geq c(K)
\prob \Big ( \{u \lrrc x\} \circ \{v \lrrc y\} \Big ) \, ,$$ and
so by (\ref{revBKstp0})
$$ \sum _{x,y \in \Z^d} \prob \Big ( \{0 \lrr x\} \circ \{0 \lrr y\} \Big )
\geq {c(K) \over 2} G(r-K)^2 \, .$$ And it is clear that $G(r)
\leq CK^d G(r-K)$ and the assertion of the lemma follows, if
only $K$ can be chosen independently of $r$.

We proceed to prove (\ref{revBKstp0}). For any $u, v \in \Z^d$ and
an integer $\ell >0$ (later we put $\ell=r-K$) we have
$$ \prob \big (  \{u \lrrell x\} \circ \{v \lrrell y\} \big ) \geq \prob
\Big ( u \lrrell x \and v \lrrell y \and \C(u) \neq \C(v) \Big )
\, ,$$ where $\C(u)$ and $\C(v)$ denote the connected components
containing $u$ and $v$, respectively. By conditioning on $\C(u)$
we get that the right hand side equals
\begin{eqnarray*}
  \sum_{\substack{A\subset E(\Z^d)\\
                  u\lrrell x, u\nleftrightarrow v \inn A}}
\prob(\C(u)=A)\prob \Big (v \lrrell y \mid \C(u) =A \Big ) \, .
\end{eqnarray*}
Note that for $A$ such that $u\nleftrightarrow v$ in $A$ we have
$\prob \big ( \{v \lrrell y\} \mid \C(u) =A \big ) = \prob \big (
v \lrrell y \off A \big )$ where the event $\{v \lrrell y$ off
$A\}$ again means that there exists an open path of length at most
$\ell$ connecting $v$ to $y$ which avoids the vertices of $A$. At
this point we can remove the condition that $u\nleftrightarrow v$
in $A$ since in this case the event $\{v \lrrell y$ off $A\}$ is
empty. We get
\begin{equation}\label{condcu}
\prob \big (  \{u \lrrell x\} \circ \{v \lrrell y\} \big ) \geq
  \sum_{\substack{A\subset E(\Z^d)\\
                  u\lrrell x \inn A}}
\prob(\C(u)=A)\prob \big ( v \lrrell y \off A \big ) \, .
\end{equation}
Now, since
$$
\prob ( u \lrrell x) \prob (v \lrrell y) =
\sum_{\substack{A \subset E(\Z^d)\\
                u \lrrell x \inn A}}
\prob(\C(u)=A)  \prob (\{v \lrrell y\}) \, ,$$
we deduce by (\ref{condcu}) that \begin{multline}
\label{revBKstp1} \prob \big ( \{u
\lrrell x\} \circ \{v \lrrell y\} \big ) \geq \prob (u \lrrell x) \prob (v \lrrell y) \\
- \sum _{\substack{A \subset E(\Z^d)\\
                   u \lrrell x \inn A}}
\prob(\C(u)=A) \prob \big ( v \lrrell y \on A \big ) \,
,\end{multline} where the event $\{v \lrrell y \on A\}$ means that
there exists an open path between $v$ and $y$ of length at most
$\ell$ and any such path must have a vertex in $A$. For any $A
\subset \Z^d$ we have
$$ \prob \big ( v \lrrell y \on A \big ) \leq \sum _{z \in A}
\prob \big ( \{v \lr z\} \circ \{z \lrrell y\} \big ) \, .$$
Putting this into the second term of the right hand side of
(\ref{revBKstp1}) and changing the order of summation gives that
we can bound this term from above by
$$\sum_{z \in \Z^d} \prob\big(u \lrrell x \, , \, u \lr
z \big ) \prob \big ( \{v \lr z\} \circ \{z \lrrell y\} \big ) \,
.$$ If $u \lrrell x$ and $u \lr z$ then there exists $z'$ such
that the events $u \lr z'$ and $z' \lr z$ and $z' \lrrell x$ occur
disjointly. Using the BK inequality we bound this sum
above by
$$ \sum_{z, z'\in \Z^d} \prob(u \lr z') \prob(z' \lr z) \prob (z'
\lrrell x) \prob(v \lr z) \prob(z \lrrell y) \, .$$
We sum this over $x$ and $y$ and use (\ref{revBKstp1}) to get that
$$ \sum _{x,y\in\Z^d} \prob \big (  \{u
\lrrell x\} \circ \{v \lrrell y\} \big ) \geq G(\ell)^2 -
G(\ell)^2 \sum _{z, z'\in \Z^d} \prob(u \lr z') \prob(z' \lr z) \prob(z \lr v) \, .$$
Since $|u-v|\geq K$, by the open triangle condition (\ref{opentriangle}) we can take $K$ large enough (independently of $r$) so that
$$ \sum _{z, z'\in \Z^d} \prob(v \lr z) \prob(z \lr z') \prob(z' \lr u) \leq {1 \over 2} \, ,$$
which immediately yields (\ref{revBKstp0}) and concludes our proof.
\qed

\begin{figure}
\input{overcounted.pstex_t}
\caption{\label{cap:overcounted}The couple $(x,y)$ is over-counted.}
\end{figure}
\medskip
\noindent {\bf Proof of Lemma \ref{volrecur}.} We start with a
definition. For an integer $K>0$ we say two vertices $x,y \in
\Z^d$ are {\em K-over-counted} if there exists $u,v \in \Z^d$ with
$|u-x|\geq K$ and $|v-x|\geq K$ such that
$$ \{0 \lrr u\} \circ \{v \lr x\} \circ \{x \lr u\} \circ \{u
\lr v\} \circ \{v \lrr y \} \, ,$$ see figure \ref{cap:overcounted}. Denote by
$N(K)$ the quantity
$$ N(K) = \Big | \Big \{ (x,y) \, : \, \{0 \lrr x\} \circ \{x \lrr
y\} \and (x,y) \hbox{ {\rm are not $K$-over-counted}} \Big \} \Big
| \, .$$
We claim that
\be \label{determ} N(K) \leq CK^{d} \cdot 2r
|\bcr(0,2r)| \ee Indeed, this deterministic claim follows by
observing that if $y \in \bcr(0,2r)$ and $\gamma$ is an open
simple path of length at most $2r$ connecting $0$ to $y$, then for
any $x \in \Z^d$ of distance at least $K$ from $\gamma$ (i.e., $x$
is of distance at least $K$ from {\em every} vertex of $\gamma$)
satisfying $\{0 \lrr x\}\circ \{x \lrr y\}$ the pair $(x,y)$ is
$K$-over-counted. To see this, let $\gamma_1$ and $\gamma_2$ be
disjoint open simple paths of length at most $r$ connecting $0$ to
$x$ and $x$ to $y$ respectively and take $u$ to be the last point
on $\gamma \cap \gamma_1$ and $v$ the first point on $\gamma \cap
\gamma_2$ where the ordering is induced by $\gamma_1$ and $\gamma_2$
respectively. Hence the map
$(x,y)\mapsto y$ from $N(K)$ into $B(0,2r)$ is at most $CK^d\cdot 2r$ to $1$,
which shows (\ref{determ}).

We now estimate $\E N(K)$. For any $(x,y)$ the BK inequality and
(\ref{tpt}) implies that the probability that $(x,y)$ are
$K$-over-counted is at most
$$ \sum_{\substack{u:|u-x|\geq K\\v: |v-x|\geq K}} \prob(0 \lrr u) \prob(v \lrr y) \prob(x \lr v) \prob(v \lr u) \prob(u \lr x) \, .$$
Writing $v'=v-x$ and $u'=u-x$ and using translation invariance we
get that this sum equals
$$
 \sum_{\substack{u': |u'|\geq K\\v': |v'|\geq K}}
  \prob(-x \lrr u') \prob(v' \lrr y-x) \prob (0 \lr v') \prob(v' \lr u') \prob (u' \lr 0) \, .
$$
We sum this over $y$ and then over $x$
and get that \begin{align*} \E \Big [ \big | \big \{ (x,y)
\hbox{ {\rm are $K$-over-counted}}\big \} \big | \Big ] &\leq
G(r)^2
\sum_{\substack{u':|u'|\geq K\\v':|v'|\geq K}} \prob (0 \lr v') \prob(v' \lr u') \prob (u' \lr 0) \, .
\end{align*}
This together with the triangle condition (\ref{triangle}) and Lemma \ref{revBK} gives that for some small $c>0$ we can choose some large $K$ such that $\E N(K) \geq c G(r)^2$. We take expectations in (\ref{determ}) and plug the
estimate $\E N(K) \geq c G(r)^2$ in to get the assertion of the
lemma. This concludes the proof of part (i) of Theorem \ref{exp}.
\qed





\newcommand{
For the proof of proposition \ref{prop:GamH} we require the
following definitions. Fix some $K>1$. We say two paths $\beta,
\gamma$ are $(K,r)$-paths if $\beta, \gamma$ are two paths
starting at the origin and ending at the same vertex $v$ such that
$\len(\gamma)\geq Kr$ and $\len(\beta)\leq r$, and such that $\beta$ is a
shortest path between the origin and $v$. Given such paths,
an {\em excursion} of $\gamma$ is a sub-path of $\gamma$ such that
only the beginning and end vertex of it belong to $\beta$. It is
clear that $\gamma$ can be partitioned into disjoint excursions.
For two such paths $\beta, \gamma$ and an integer $s>0$ define
$$ E_s(\beta,\gamma) = \Big | \big \{ x \in \gamma \, : \, \hbox{{\rm the excursion containing $x$ has length}} \in [s,2s) \big \} \Big
|\, .$$ Proposition \ref{prop:GamH} follows from the four
lemmas below, bounding the rate of decay of $E_s(\beta,\gamma)$ in
four different regimes of $s$.

\begin{lemma}\label{endcase} We have
$$ \prob \Big ( \exists \,\, (K,r){\hbox{{\rm -paths} }} \beta,
\gamma \hbox{ {\rm such that $\gamma$ has an excursion of length
at least $r$}} \Big ) \leq Cr^{-3/2} \, .$$
\end{lemma}

\begin{lemma} \label{case1} There exists some large $K_1$ such that for any $s <
r^{1/d}$ we have
$$ \prob \Big ( \exists \,\, (K,r){\hbox{{\rm -paths} }} \beta, \gamma \with E_s(\beta,
\gamma) \geq K_1 r s^{3-d/2} \Big ) \leq e^{-\sqrt{r}} \, .$$
\end{lemma}

\begin{lemma} \label{case2} There exists some large $K_2$ such
that for any $s \in [r^{1/d}, r^{1/(d/2-2.1)})$ we have
$$ \prob \Big ( \exists \,\, (K,r){\hbox{{\rm -paths} }} \beta, \gamma \with E_s(\beta,
\gamma) \geq K_2 r s^{3.1-d/2} \Big ) \leq r^{-3} \, .$$
\end{lemma}

\noindent (the value $3.1$ can be replaced with any value $>3$)

\begin{lemma} \label{case3} There exists some large $K_3$ such
that for any $s \in [r^{1/(d/2-2.1)},r]$ we have
$$ \prob \Big ( \exists \,\, (K,r){\hbox{{\rm -paths} }} \beta, \gamma \with E_s(\beta,
\gamma) \geq K_3 s \Big ) \leq r^{-3} \, .$$
\end{lemma} \vspace{.2in}

\noindent Again we first show how to use these lemmas



\noindent{\bf Proof of proposition \ref{prop:GamH}.} We show
that for large enough $K$ we have $\Gamma(Kr) \leq 2 H(r)$ for all
$r$. To that aim we estimate $\prob ( \Gamma(Kr) \setminus H(r)
)$. If $\Gamma(Kr)$ occurs then there is some path $\gamma$ of
length at least $Kr$. If, in addition, $H(r)$ does not occur then
there exists some path $\beta$ of length at most $r$ from $0$ to
$\gamma(\len(\gamma))$ and we may assume $\beta$ has minimal length. If
$\gamma$ has no excursion of length at
least $r$ then $E_s(\beta, \gamma)=0$ for all $s > r$ and hence
$$ \sum _{n=1} ^{\lfloor \log_2 r \rfloor}
E_{2^n}(\beta,\gamma) = \len(\gamma) \geq Kr \, .$$ Summing up the
estimate of Lemmas \ref{case1}, \ref{case2} and \ref{case3} imply
that there is a $K$ such that \be\label{finish} \prob \Big (
\exists \,\, \beta, \gamma \with \sum _{n=1}^{\lfloor \log_2 r
\rfloor} E_{2^n}(\beta,\gamma) \geq Kr \Big ) \leq r^{-3} \log r
\, .\ee This together with Lemma \ref{endcase} yields that
$$ \prob (\Gamma(Kr)) \leq \prob (H(r)) + 2Cr^{-3/2} \, ,$$
which together with part (\ref{enu:H>}) of Theorem \ref{exp} concludes our
proof. \qed

\noindent We now proceed with the proofs of Lemmas \ref{endcase},
\ref{case1}, \ref{case2} and \ref{case3}.

\noindent {\bf Proof of Lemma \ref{endcase}.} Denote the required
event by $B_0$
$$B_0 = \Big \{ \exists \,\, (K,r){\hbox{{\rm -paths} }} \beta, \gamma
\hbox{ {\rm such that $\gamma$ has an excursion of length at least
$r$}} \Big \} \, .$$ If $B_0$ occurs then there exists $y\in \Z^d$
such that there is a simple open cycle starting and ending at $y$
of length at least $r$. We apply Lemma \ref{longpath} to get
$$ \prob (B_0) \leq C \sum _{y \in \Z^d} \prob \big ( 0 \lrr y \big ) r^{1-d/2} \leq C r^{2-d/2} \, ,$$
where the last inequality follows from part (\ref{enu:EB0r}) of Theorem
\ref{exp}. Since $d \geq 7$ we get the required result. \qed


Before the proof of Lemma \ref{case1}, we provide a small
technical claim that will assist us later on.

\begin{claim} \label{technical} Let $\QQ$ be the set of all
mappings $Q:\{1,\ldots, q\} \to \N$ satisfying
$$ \big | \{ j \, : \, Q(j) = i \} \big | \leq q2^{2-i} .$$
Then there exists some constant $C>0$ such that $|\QQ|\leq C^q$.
\end{claim}
\begin{proof}[{\bf Proof}] The number of possible options for
$Q^{-1}(i)$ is
$$ \sum _{k=0}^{q2^{2-i}} {q \choose k} \leq (C2^i)^{q2^{2-i}} \,
.$$ To see the last inequality note that when $i\geq 3$ the last
term in the sum is the dominant one and use the inequality ${q
\choose k} \leq (eq/k)^k$. Multiplying these up gives that the
number of possible $Q$'s is at most
\begin{equation*}
\exp \big ( C \sum_i q i 2^{2-i}  \big ) \leq \exp(Cq) \, .\qedhere
\end{equation*}
\end{proof}
\noindent {\bf Proof of Lemma \ref{case1}.} Denote by $B_1$ the
required event
$$ B_1 = \Big \{ \exists \,\, (K,r){\hbox{{\rm -paths} }} \beta, \gamma \with E_s(\beta,
\gamma) \geq K_1 r s^{3-d/2} \Big \} \, .$$ Assume that $B_1$
holds and take some $(K,r)$-paths $\beta, \gamma$.
Clearly the number of excursions of length in $[s,2s)$ of $\gamma$
is at least $E_s /2s$ and hence, if $B_1$ holds, then we have at
least ${1 \over 2} K_1 rs^{2-d/2}$ such excursions. Each excursion
$\delta$ starts at a vertex $v \in \beta$ and ends at some $w \in
\beta$ where we have $v$ preceding $w$ in the order induced by
$\beta$ (it is possible that in $\gamma$ the vertex $w$ is
traversed before $v$ but we ignore the order induced by $\gamma$).
It is possible that the same vertex $v$ appear as the starting
vertex of two different excursions. For example, if $v_1, v_2$ and
$v_3$ are vertices such that $v_i$ appears in $\beta$ before
$v_{i+1}$ but $\gamma$ has an excursion between $v_2$ and $v_1$
and then between $v_1$ and $v_3$. In that case $v_1$ will appear
twice as an excursion starting vertex. See figure \ref{cap:v1v2v3}.

\begin{figure}
\input{v1v2v3.pstex_t}
\caption{\label{cap:v1v2v3}The vertex $v_1$ is the starting vertex for two
  different excursions. Arrows indicate direction from $0$ to the common end
  point.}
\end{figure}

However, no vertex can appear more than twice in the role of an
excursion starting vertex. Also, if a vertex does appear twice in
that role, then the respective excursion ending vertices must be
different. All this means that we can order the excursions
$(\delta_1, \delta_2, \ldots)$ by the order of their respective
excursion starting vertices $v_i$. If two excursions have the same
starting vertex, we order them by the order of their ending
vertices. We set an integer $q$ \be\label{qval} q = \Big \lfloor
{1 \over 16} K_1 r s^{1-d/2} \Big \rfloor \, ,\ee and we say that
a $q$-tuple of excursions $(\delta_{i_1}, \ldots, \delta_{i_q})$
is {\em admissible} if $i_{j+1}-i_j \geq 4s$ for all $j$. As we
said before, if $B_1$ holds, then we have at least ${1 \over 2}
K_1 rs^{2-d/2}$ excursions and it is thus possible to find at
least $(4s)^q$ admissible tuples just by taking $i_j$ to be any
number in $[(8j-2)s, (8j+2)s]$.

Let $(i_1,\ldots, i_q)$ be an admissible tuple and let $v_j$ and
$w_j$ be the starting and ending vertices of $\delta_{i_j}$
respectively. Since $\beta$ is a shortest path, the number of
vertices in $\beta$ between $v_j$ and $w_j$ is no more than $2s$.
Hence, since $i_{j+1}-i_j \geq 4s$ it must be that $\beta$ visits
$w_j$ before $v_{j+1}$ (here we used the fact that the excursion
are ordered by their starting vertex). We learn that the event
$B_1$ implies that there are at least $(4s)^q$ tuples $(v_1,w_1,
v_2,w_2, \ldots, v_q, w_q)$ with the following properties:
\begin{enumerate}
\item Between every $v_j$ and $w_j$ there are two simple open
paths $\beta_j$ and $\delta_j$ such that $\len(\beta_j)\leq 2s$
and $\len(\delta_j) \in [s,2s)$.

\item Between every $w_{j-1}$ and $v_j$ there exists an open path
$\beta_j'$ which have accumulated length of at most $r$.

\item All the paths $\delta_j, \beta_j$ and $\beta_j'$ are edge
disjoint.
\end{enumerate}

Denote by $X_1(v_1,w_1, \ldots, v_q,w_q)$ the event that $(1)$
occurs and the paths $\delta_j$ and $\beta_j$ are disjoint for all
$j$, and by $X_2(v_1,w_1, \ldots, v_q,w_q)$ the event that $(2)$
occurs and all the paths $\beta_j'$ are disjoint. The BK inequality
and Markov's inequality gives that
\be
\label{lastmove} \prob(B_1)
\leq (4s)^{-q} \sum_{v_1,\ldots, w_q} \prob\big(X_1(v_1, \ldots,
w_q)\big) \prob\big(X_2(v_1, \ldots, w_q)\big) \, .
\ee
To bound $\prob(X_1)$ we use the BK inequality to get
$$
\prob\big(X_1(v_1, \ldots, w_q)\big) \leq \prod_{j=1}^q
\prob(v_j \stackrel{\, 2s}{\leftrightarrow} w_j)\prob(\exists
\delta_j \hbox{ {\rm from $v_j$ to $w_j$}} \with
\len(\delta_j)\geq s) \, ,
$$
and Lemma \ref{longpath} gives that
\be
\label{x1est} \prob\big(X_1(v_1, \ldots, w_q)\big) \leq \big(C
s^{1-d/2}\big)^q \prod_{j=1}^q \prob(v_j \stackrel{2s}
{\leftrightarrow} w_j) \, .
\ee
To estimate $\prob(X_2)$ we need
to examine the lengths of $\beta_j'$. Define a mapping $Q : \{1
,\ldots, q\} \to \N$ by
$$ Q(j)=\begin{cases}
1 & \len(\beta_{j}')<\frac{2r}{q}\\
i &
\len(\beta_{j}')\in\left[\frac{2^{i-1}r}{q},\frac{2^{i}r}{q}\right).\end{cases}$$
In particular $2^{Q(j)} \leq 2 q \len(\beta_j')/r + 2$. Since
$\sum_j \len(\beta_j') \leq r$ we have that \be\label{qcond} \sum
_{j=1}^q 2^{Q(j)} \leq 4q \, ,\ee and in particular
$$ \big | \{ j \, : \, Q(j)=i \} \big | \leq q 2^{2-i} \, .$$
Denote by $\QQ$ the set of all $Q$'s satisfying (\ref{qcond}). By the
BK inequality we have
$$\prob\big(X_2(v_1, \ldots, w_q)\big) \leq \sum_{Q \in \QQ} \prod_{j=1}^q \prob \Big( w_{j-1} \stackrel{\,\, {2^{Q(j)} r \over q}}{\longleftrightarrow} v_j \Big ) \, .$$
Putting this together with (\ref{x1est}) we get
$$ \prob(X_1)\prob(X_2) \leq \big(C s^{1-d/2}\big)^q \sum_{Q \in \QQ} \prod_{j=1}^q \prob(v_j
\stackrel{\, s}{\leftrightarrow} w_j) \prod_{j=1}^q \prob \Big(
w_{j-1} \stackrel{\,\, {2^{Q(j)} r \over q}}{\longleftrightarrow}
v_j \Big ) \, .$$ By translation invariance we can write this as
$$ \prob(X_1)\prob(X_2) \leq \big(C s^{1-d/2}\big)^q \sum_{Q \in \QQ} \prod_{j=1}^q
\prob(0 \stackrel{\, s}{\leftrightarrow} w_j-v_j) \prod_{j=1}^q
\prob \Big(0 \stackrel{\,\, {2^{Q(j)} r \over
q}}{\longleftrightarrow} v_j -  w_{j-1}\Big ) \, .
$$
We now sum this over all $v_j$ and $w_j$ and change the order of summation to
get
$$
\sum _{v_1, \ldots, w_q} \prob(X_1)\prob(X_2) \leq \big(C s^{1-d/2}\big)^q
\sum_{Q \in \QQ} \Big [ \Big (\sum_v \prob( 0 \stackrel{2s}{\leftrightarrow} v
) \Big)^q
\prod _{j=1}^q \sum_v  \prob \Big (0 \stackrel{\,\, {2^{Q(j)} r
\over q}}{\longleftrightarrow} v \Big ) \Big ] \, .
$$
Part (\ref{enu:EB0r}) of Theorem \ref{exp} gives that
$$ \sum_v \prob( 0 \stackrel{\,s}{\leftrightarrow} v ) \leq Cs \,
,$$ and hence
$$ \sum _{v_1, \ldots, w_q} \prob(X_1)\prob(X_2) \leq \big(C
s^{2-d/2}\big)^q \sum_{Q \in \QQ} \Big ( \prod _{j=1}^q C
{2^{Q(j)} r \over q} \Big ) \, .$$ The arithmetic-geometric mean
inequality gives that
$$ \prod _{j=1}^q C{2^{Q(j)} r \over q} \leq \Big ( {Cr \over q^2} \sum_j 2^{Q(j)} \Big
)^q \leq \Big ( {Cr \over q} \Big )^q \, ,$$ where the last
inequality is due to (\ref{qcond}). Applying Claim
\ref{technical} gives
$$ \sum _{v_1, \ldots, w_q} \prob(X_1)\prob(X_2) \leq \left ( {C
r s^{2-d/2} \over q}  \right ) ^q \, .$$ Putting this into
(\ref{lastmove}) and recalling the value of $q$ in (\ref{qval})
gives that
$$ \prob (B_1) \leq \Big ( {C \over K_1} \Big )^ q \, .$$
Since $s \leq r^{1/d}$ we have that $q \geq c\sqrt{r}$ for some
small $c>0$ and we conclude the proof by choosing $K_1$ large
enough.  \qed

\medskip
\noindent{\bf Proof of Lemma \ref{case2}.}
The argument is similar to that of the previous lemma, but we need a different
definition for admissible tuple. There we worked around the
fact that the various excursions intertwine over $\beta$ in complicated ways
(in other words, that the segments $(v,w)$ could intersect) by simply
selecting a collection of $q$ excursions that did not intertwine. This costed
a factor of $s$ in the number of moments $q$ but we didn't care. For larger
$s$ we can no longer pay this factor. On the other hand, we no longer need to
be careful with factors of the type $q^q$ and this simplifies some of the
calculations.

Denote by $B_2$ the
required event
$$ B_2 = \Big \{ \exists \,\, (K,r){\hbox{{\rm -paths} }} \beta, \gamma \with E_s(\beta,
\gamma) \geq K_2 r s^{3.1-d/2} \Big \} \, .$$
As in the previous lemma, assume the decomposition of $\gamma$ into excursions
$(\delta_1, \delta_2, \ldots)$ such that the excursions are
ordered by the order of their beginning vertex. For $i_1 < \cdots
< i_k$ write $v_j,w_j\in \beta$ for the respective starting and
ending vertices of excursion $\delta_{i_j}$ for $1 \leq j \leq k$.
Also write $\beta[v_j,w_j]$ for the sub-path of $\beta$ starting
at $v_j$ and ending at $w_j$. For an integer $k>0$ denote by
$Y(k)$ the event
$$ Y(k) = \Big \{ \exists \, i_1 < \cdots < i_k \with \beta[v_j,w_j] \cap \beta[v_1,w_1] \neq \emptyset \hbox{ {\rm for} } j=2,\ldots, k \Big \} \, .$$
Observe that the event $Y(k)$ is equivalent to saying that there
exists $i_1 < \cdots < i_k$ such that $v_j$ precedes $w_1$ in
$\beta$ for all $1 \leq j \leq k$. We first show that \be
\label{yk} \prob (Y(k)) \leq C_k rs^{k(3-d/2)} \, .\ee To see this
let $i_1 < \cdots < i_k$ be as in the definition of $Y(k)$ and let
$x_1, \ldots ,x_{2k}$ be the total collection of $v_j$ and $w_j$,
ordered by their appearance in $\beta$. If $Y(k)$ occurs we must
have the following:
\begin{enumerate}
\item There exists a path $\beta_0$ from $0$ to $x_1$ with
$\len(\beta_0) \leq r$.

\item There exists simple open paths $\beta_j$ between $x_j$ and
$x_{j+1}$ with $\len (\beta_j) \leq 2s$. This is because $\beta$
is a shortest path so the length of $\beta(v_1,w_1)$ at most $2s$
and all other $v_j$'s precede $w_1$ in $\beta$.

\item There exists a perfect matching of the $x_j$ i.e.~a function
$\varphi:\{1,\ldots,2k\}\to\{1,\ldots,2k\}$ with $\varphi(\varphi(j))=j$
such that between every couple $x_j, x_{\varphi(j)}$ there exists a simple
open path $\delta_j$ with $\len(\delta_j)\geq s$.

\item The paths $\beta_j$ and $\delta_j$ are edge-disjoint (note
that there are only $k$ different $\delta_j$).
\end{enumerate}
Write $Y(x_1, \ldots, x_{2k})$ for the above event. Since the
number of perfect matchings $\varphi$ is $2^k k!$, the BK inequality
and Lemma \ref{longpath} give that
$$ \prob (Y(x_1,\ldots, x_{2k})) \leq 2^k k! \big ( Cs^{1-d/2} \big)^k \prob ( 0
\lrr x_1 ) \prod_{j=1}^{2k-1} \prob ( x_j \stackrel{
2s}{\leftrightarrow} x_{j+1} ) \, .$$ By translation invariance we
have
$$ \prob (Y(x_1,\ldots, x_{2k})) \leq 2^k k! \big ( Cs^{1-d/2} \big)^k \prob ( 0
\lrr x_1 ) \prod_{j=1}^{2k-1} \prob ( 0 \stackrel{
2s}{\leftrightarrow} x_{j+1}-x_j ) \, .$$ We sum this over $x_1,
\ldots, x_{2k}$ and use part $(1)$ of Theorem \ref{exp} to get
$$ \prob (Y(k)) \leq 2^k k! \big ( Cs^{1-d/2} \big)^k \cdot Cr
\cdot (Cs)^{2k-1} \, ,$$ which gives (\ref{yk}).

Since $s \geq r^{1/d}$ and $d \geq 7$ we can fix some large $k$
such that
$$ \prob (Y(k)) \leq r^{-4} \, ,$$
and continue in a similar way to Lemma \ref{case1} only we take
$q$ this time to be a large constant (independent of $r$) to be
chosen later. We say a $q$-tuple $i_1 < \cdots < i_q$ is {\em
admissible} if $i_{j+1} - i_j > k$. As before, if $B_2$ occurs,
then there are at least $L={1 \over 2} K_2 r s^{2.1 - d/2}$
excursions with length in $[s,2s)$. We get that if
$$ \bigg | i_j - j{L \over q+1} \bigg | < {1 \over 2} \bigg ( {L
\over q+1} -k \bigg ) \, ,$$
then the tuple $(i_1, \ldots, i_q)$ is
admissible. Hence, if $B_2$ occurs then there are at least
$(L/(q+1)-k)^q$ admissible tuples. Observe that if $Y(k)$ does not
occur then for any admissible tuples $i_1 < \cdots < i_q$ the
vertex $w_{j}$ (the end vertex of $\delta_{i_j}$) precedes
$v_{i_{j+1}}$ (the beginning vertex of $\delta_{i_{j+1}}$) in the
order induced by $\beta$. Denote this event by $X(v_1, \ldots,
w_q)$ and so we can repeat the argument of Lemma \ref{case1}
\be\label{eq:B2X1X2}
\prob (B_2\setminus Y(k)) \leq \bigg(\frac{L}{q+1}-k\bigg)^{-q} \sum _{v_1, \ldots, w_q}
\prob (X_1(v_1, \ldots, w_q))\prob (X_2(v_1, \ldots, w_q)) \, ,
\ee
where $X_1$ and $X_2$ are defined as in the previous lemma. The BK
inequality yields
\begin{multline*}
 \prob(X_1)\prob(X_2) \leq \prod_{j=1}^q \prob(v_j \stackrel{\,
s}{\leftrightarrow} w_j) \cdot\\
\prob(\exists \delta_j \hbox{ {\rm from
$v_j$ to $w_j$ with} } \len(\delta_j)\geq s) \prod_{j=1}^q
\prob(w_{j-1} \lrr v_j) \, ,
\end{multline*}
and by Lemma \ref{longpath} we get
$$ \prob(X_1)\prob(X_2) \leq \big (C s^{1-d/2}\big)^q \prod_{j=1}^q \prob(v_j \stackrel{\,
s}{\leftrightarrow} w_j) \prod_{j=1}^q \prob(w_{j-1} \lrr v_j) \,
.$$ We sum over $v_1, \ldots , w_q$ and use translation invariance
and part $(1)$ of Theorem \ref{exp} as in the previous lemma to
get
$$
\sum_{v_1, \ldots ,w_q} \prob(X) \leq \big ( Cr s^{2-d/2} \big
)^q \, .
$$
Inserting this and the value of $L$ into (\ref{eq:B2X1X2}) gives
that
$$ \prob (B_2 \setminus Y(k)) \leq { ( Cr s^{2-d/2} \big
)^q \over \big ( {1 \over 2} K_2 r s^{2.1 - d/2}/(q+1) - k \big
)^q }\, .$$ We put $K_2 = 4(q+1)k$ and since $s \leq
r^{1/(d/2-2.1)}$ we get
$$ \prob (B_2 \setminus Y(k)) \leq \Big ( {C \over K_2 s^{0.1} }
\Big )^q \, ,$$ and since $s \geq r^{1/d}$ it is clear that we can
take $q$ large enough (but fixed) such that $\prob(B_2 \setminus
Y(k)) \leq r^{-4}$. The lemma follows by our previous estimate
that $\prob (Y(k))\leq r^{-4}$. \qed

\medskip
\noindent{\bf Proof of Lemma \ref{case3}.} Denote by $B_3$ the
required event
$$ B_3 = \Big \{ \exists \,\, (K,r){\hbox{{\rm -paths} }} \beta, \gamma \with E_s(\beta,
\gamma) \geq K_3 s \Big \} \, .$$ The analysis here is the
simplest because in this regime of $s$ such large excursions are
very rare. If $B_3$ occurs then there are at least $q={1 \over
2}K_3$ excursions of length in $[s,2s)$. Thus, if $B_3$ occurs
then there exists vertices $x_1, \ldots ,x_{2q}$ such that
\begin{enumerate}
\item There exists a perfect matching of the $x_j$ $\varphi:\{1,\ldots
,2q\}\to \{1, \ldots , 2q\}$ such that between
every pair $x_j, x_{\varphi(j)}$ there exists a simple open path
$\delta_j$ of length at least $s$.

\item For $j=1, \ldots, 2q$ there exists a simple open path
$\beta_i$ between $x_{i-1}$ and $x_i$ (denote $x_0=0$). At least
$q$ of these $\beta_j$ have length at most $2s$ and the rest have
length at most $r$.

\item The paths $\beta_j$ and $\delta_j$ are edge-disjoint (again,
there are only $q$ disjoint $\delta_j$'s).
\end{enumerate}
Denote by $X(x_1, \ldots, x_{2q})$ this event. BK inequality and
Lemma \ref{longpath} give that
$$ \prob(X) \leq 2^q q! \big ( Cs^{1-d/2} \big)^q \mathop {\sum_{A \subset \{1,\ldots, 2q\}}}_{|A|\geq q} \prod _{j\in A} \prob ( x_{j-1}
\stackrel{2s}{\leftrightarrow}x_j ) \prod_{j \not \in A} \prob (
x_{j-1} \lrr x_j ) \, .$$ We sum this over $x_1, \ldots, x_{2q}$
and use translation invariance as before to get
$$ \sum_{x_1,\ldots,x_{2q}} \prob(X) \leq \big ( C q s^{1-d/2} \big)^q \mathop {\sum_{A \subset \{1,\ldots, 2q\}}}_{|A|\geq
q} \Big ( \sum_{x \in \Z^d} \prob ( 0
\stackrel{2s}{\leftrightarrow} x) \Big)^{|A|} \Big (\sum_{x \in
\Z^d} \prob ( 0 \lrr x ) \Big )^{2q - |A|} \, ,$$ and by part
$(1)$ of Theorem \ref{exp} we get
$$ \sum_{x_1,\ldots,x_{2q}} \prob(X) \leq \big ( C q^2 r s^{2-d/2}
\big )^q \, .$$ Since $s \geq r^{1/(d/2-2.1)}$ we have that
$s^{2-d/2} \leq r^{-1 - {0.2 \over d}}$ and hence
$$ \sum_{x_1,\ldots,x_{2q}} \prob(X(x_1, \ldots, x_{2q})) \leq
\big ( C q^2 r^{-0.2/d} \big )^q \, ,$$ and choosing $K_3$ large
enough (but fixed) concludes the proof of the lemma. \qed

}
{
For the proof of proposition \ref{prop:GamH} we require the
following definitions. Fix some $K>1$. We say two paths $\beta,
\gamma$ are $(K,r)$-paths if $\beta, \gamma$ are two paths
starting at the origin and ending at the same vertex $v$ such that
$\len(\gamma)\geq Kr$ and $\len(\beta)\leq r$, and such that $\beta$ is a
shortest path between the origin and $v$. Given such paths,
an {\em excursion} of $\gamma$ is a sub-path of $\gamma$ such that
only the beginning and end vertex of it belong to $\beta$. It is
clear that $\gamma$ can be partitioned into disjoint excursions.
For two such paths $\beta, \gamma$ and an integer $s>0$ define
$$ E_s(\beta,\gamma) = \Big | \big \{ x \in \gamma \, : \, \hbox{{\rm the excursion containing $x$ has length}} \in [s,2s) \big \} \Big
|\, .$$ Proposition \ref{prop:GamH} follows from the four
lemmas below, bounding the rate of decay of $E_s(\beta,\gamma)$ in
four different regimes of $s$.

\begin{lemma}\label{endcase} We have
$$ \prob \Big ( \exists \,\, (K,r){\hbox{{\rm -paths} }} \beta,
\gamma \hbox{ {\rm such that $\gamma$ has an excursion of length
at least $r$}} \Big ) \leq Cr^{-3/2} \, .$$
\end{lemma}

\begin{lemma} \label{case1} There exists some large $K_1$ such that for any $s <
r^{1/d}$ we have
$$ \prob \Big ( \exists \,\, (K,r){\hbox{{\rm -paths} }} \beta, \gamma \with E_s(\beta,
\gamma) \geq K_1 r s^{3-d/2} \Big ) \leq e^{-\sqrt{r}} \, .$$
\end{lemma}

\begin{lemma} \label{case2} There exists some large $K_2$ such
that for any $s \in [r^{1/d}, r^{1/(d/2-2.1)})$ we have
$$ \prob \Big ( \exists \,\, (K,r){\hbox{{\rm -paths} }} \beta, \gamma \with E_s(\beta,
\gamma) \geq K_2 r s^{3.1-d/2} \Big ) \leq r^{-3} \, .$$
\end{lemma}

\noindent (the value $3.1$ can be replaced with any value $>3$)

\begin{lemma} \label{case3} There exists some large $K_3$ such
that for any $s \in [r^{1/(d/2-2.1)},r]$ we have
$$ \prob \Big ( \exists \,\, (K,r){\hbox{{\rm -paths} }} \beta, \gamma \with E_s(\beta,
\gamma) \geq K_3 s \Big ) \leq r^{-3} \, .$$
\end{lemma} \vspace{.2in}

\noindent Again we first show how to use these lemmas



\noindent{\bf Proof of proposition \ref{prop:GamH}.} We show
that for large enough $K$ we have $\Gamma(Kr) \leq 2 H(r)$ for all
$r$. To that aim we estimate $\prob ( \Gamma(Kr) \setminus H(r)
)$. If $\Gamma(Kr)$ occurs then there is some path $\gamma$ of
length at least $Kr$. If, in addition, $H(r)$ does not occur then
there exists some path $\beta$ of length at most $r$ from $0$ to
$\gamma(\len(\gamma))$ and we may assume $\beta$ has minimal length. If
$\gamma$ has no excursion of length at
least $r$ then $E_s(\beta, \gamma)=0$ for all $s > r$ and hence
$$ \sum _{n=1} ^{\lfloor \log_2 r \rfloor}
E_{2^n}(\beta,\gamma) = \len(\gamma) \geq Kr \, .$$ Summing up the
estimate of Lemmas \ref{case1}, \ref{case2} and \ref{case3} imply
that there is a $K$ such that \be\label{finish} \prob \Big (
\exists \,\, \beta, \gamma \with \sum _{n=1}^{\lfloor \log_2 r
\rfloor} E_{2^n}(\beta,\gamma) \geq Kr \Big ) \leq r^{-3} \log r
\, .\ee This together with Lemma \ref{endcase} yields that
$$ \prob (\Gamma(Kr)) \leq \prob (H(r)) + 2Cr^{-3/2} \, ,$$
which together with part (\ref{enu:H>}) of Theorem \ref{exp} concludes our
proof. \qed

\noindent We now proceed with the proofs of Lemmas \ref{endcase},
\ref{case1}, \ref{case2} and \ref{case3}.

\noindent {\bf Proof of Lemma \ref{endcase}.} Denote the required
event by $B_0$
$$B_0 = \Big \{ \exists \,\, (K,r){\hbox{{\rm -paths} }} \beta, \gamma
\hbox{ {\rm such that $\gamma$ has an excursion of length at least
$r$}} \Big \} \, .$$ If $B_0$ occurs then there exists $y\in \Z^d$
such that there is a simple open cycle starting and ending at $y$
of length at least $r$. We apply Lemma \ref{longpath} to get
$$ \prob (B_0) \leq C \sum _{y \in \Z^d} \prob \big ( 0 \lrr y \big ) r^{1-d/2} \leq C r^{2-d/2} \, ,$$
where the last inequality follows from part (\ref{enu:EB0r}) of Theorem
\ref{exp}. Since $d \geq 7$ we get the required result. \qed


Before the proof of Lemma \ref{case1}, we provide a small
technical claim that will assist us later on.

\begin{claim} \label{technical} Let $\QQ$ be the set of all
mappings $Q:\{1,\ldots, q\} \to \N$ satisfying
$$ \big | \{ j \, : \, Q(j) = i \} \big | \leq q2^{2-i} .$$
Then there exists some constant $C>0$ such that $|\QQ|\leq C^q$.
\end{claim}
\begin{proof}[{\bf Proof}] The number of possible options for
$Q^{-1}(i)$ is
$$ \sum _{k=0}^{q2^{2-i}} {q \choose k} \leq (C2^i)^{q2^{2-i}} \,
.$$ To see the last inequality note that when $i\geq 3$ the last
term in the sum is the dominant one and use the inequality ${q
\choose k} \leq (eq/k)^k$. Multiplying these up gives that the
number of possible $Q$'s is at most
\begin{equation*}
\exp \big ( C \sum_i q i 2^{2-i}  \big ) \leq \exp(Cq) \, .\qedhere
\end{equation*}
\end{proof}
\noindent {\bf Proof of Lemma \ref{case1}.} Denote by $B_1$ the
required event
$$ B_1 = \Big \{ \exists \,\, (K,r){\hbox{{\rm -paths} }} \beta, \gamma \with E_s(\beta,
\gamma) \geq K_1 r s^{3-d/2} \Big \} \, .$$ Assume that $B_1$
holds and take some $(K,r)$-paths $\beta, \gamma$.
Clearly the number of excursions of length in $[s,2s)$ of $\gamma$
is at least $E_s /2s$ and hence, if $B_1$ holds, then we have at
least ${1 \over 2} K_1 rs^{2-d/2}$ such excursions. Each excursion
$\delta$ starts at a vertex $v \in \beta$ and ends at some $w \in
\beta$ where we have $v$ preceding $w$ in the order induced by
$\beta$ (it is possible that in $\gamma$ the vertex $w$ is
traversed before $v$ but we ignore the order induced by $\gamma$).
It is possible that the same vertex $v$ appear as the starting
vertex of two different excursions. For example, if $v_1, v_2$ and
$v_3$ are vertices such that $v_i$ appears in $\beta$ before
$v_{i+1}$ but $\gamma$ has an excursion between $v_2$ and $v_1$
and then between $v_1$ and $v_3$. In that case $v_1$ will appear
twice as an excursion starting vertex. See figure \ref{cap:v1v2v3}.

\begin{figure}
\input{v1v2v3.pstex_t}
\caption{\label{cap:v1v2v3}The vertex $v_1$ is the starting vertex for two
  different excursions. Arrows indicate direction from $0$ to the common end
  point.}
\end{figure}

However, no vertex can appear more than twice in the role of an
excursion starting vertex. Also, if a vertex does appear twice in
that role, then the respective excursion ending vertices must be
different. All this means that we can order the excursions
$(\delta_1, \delta_2, \ldots)$ by the order of their respective
excursion starting vertices $v_i$. If two excursions have the same
starting vertex, we order them by the order of their ending
vertices. We set an integer $q$ \be\label{qval} q = \Big \lfloor
{1 \over 16} K_1 r s^{1-d/2} \Big \rfloor \, ,\ee and we say that
a $q$-tuple of excursions $(\delta_{i_1}, \ldots, \delta_{i_q})$
is {\em admissible} if $i_{j+1}-i_j \geq 4s$ for all $j$. As we
said before, if $B_1$ holds, then we have at least ${1 \over 2}
K_1 rs^{2-d/2}$ excursions and it is thus possible to find at
least $(4s)^q$ admissible tuples just by taking $i_j$ to be any
number in $[(8j-2)s, (8j+2)s]$.

Let $(i_1,\ldots, i_q)$ be an admissible tuple and let $v_j$ and
$w_j$ be the starting and ending vertices of $\delta_{i_j}$
respectively. Since $\beta$ is a shortest path, the number of
vertices in $\beta$ between $v_j$ and $w_j$ is no more than $2s$.
Hence, since $i_{j+1}-i_j \geq 4s$ it must be that $\beta$ visits
$w_j$ before $v_{j+1}$ (here we used the fact that the excursion
are ordered by their starting vertex). We learn that the event
$B_1$ implies that there are at least $(4s)^q$ tuples $(v_1,w_1,
v_2,w_2, \ldots, v_q, w_q)$ with the following properties:
\begin{enumerate}
\item Between every $v_j$ and $w_j$ there are two simple open
paths $\beta_j$ and $\delta_j$ such that $\len(\beta_j)\leq 2s$
and $\len(\delta_j) \in [s,2s)$.

\item Between every $w_{j-1}$ and $v_j$ there exists an open path
$\beta_j'$ which have accumulated length of at most $r$.

\item All the paths $\delta_j, \beta_j$ and $\beta_j'$ are edge
disjoint.
\end{enumerate}

Denote by $X_1(v_1,w_1, \ldots, v_q,w_q)$ the event that $(1)$
occurs and the paths $\delta_j$ and $\beta_j$ are disjoint for all
$j$, and by $X_2(v_1,w_1, \ldots, v_q,w_q)$ the event that $(2)$
occurs and all the paths $\beta_j'$ are disjoint. The BK inequality
and Markov's inequality gives that
\be
\label{lastmove} \prob(B_1)
\leq (4s)^{-q} \sum_{v_1,\ldots, w_q} \prob\big(X_1(v_1, \ldots,
w_q)\big) \prob\big(X_2(v_1, \ldots, w_q)\big) \, .
\ee
To bound $\prob(X_1)$ we use the BK inequality to get
$$
\prob\big(X_1(v_1, \ldots, w_q)\big) \leq \prod_{j=1}^q
\prob(v_j \stackrel{\, 2s}{\leftrightarrow} w_j)\prob(\exists
\delta_j \hbox{ {\rm from $v_j$ to $w_j$}} \with
\len(\delta_j)\geq s) \, ,
$$
and Lemma \ref{longpath} gives that
\be
\label{x1est} \prob\big(X_1(v_1, \ldots, w_q)\big) \leq \big(C
s^{1-d/2}\big)^q \prod_{j=1}^q \prob(v_j \stackrel{2s}
{\leftrightarrow} w_j) \, .
\ee
To estimate $\prob(X_2)$ we need
to examine the lengths of $\beta_j'$. Define a mapping $Q : \{1
,\ldots, q\} \to \N$ by
$$ Q(j)=\begin{cases}
1 & \len(\beta_{j}')<\frac{2r}{q}\\
i &
\len(\beta_{j}')\in\left[\frac{2^{i-1}r}{q},\frac{2^{i}r}{q}\right).\end{cases}$$
In particular $2^{Q(j)} \leq 2 q \len(\beta_j')/r + 2$. Since
$\sum_j \len(\beta_j') \leq r$ we have that \be\label{qcond} \sum
_{j=1}^q 2^{Q(j)} \leq 4q \, ,\ee and in particular
$$ \big | \{ j \, : \, Q(j)=i \} \big | \leq q 2^{2-i} \, .$$
Denote by $\QQ$ the set of all $Q$'s satisfying (\ref{qcond}). By the
BK inequality we have
$$\prob\big(X_2(v_1, \ldots, w_q)\big) \leq \sum_{Q \in \QQ} \prod_{j=1}^q \prob \Big( w_{j-1} \stackrel{\,\, {2^{Q(j)} r \over q}}{\longleftrightarrow} v_j \Big ) \, .$$
Putting this together with (\ref{x1est}) we get
$$ \prob(X_1)\prob(X_2) \leq \big(C s^{1-d/2}\big)^q \sum_{Q \in \QQ} \prod_{j=1}^q \prob(v_j
\stackrel{\, s}{\leftrightarrow} w_j) \prod_{j=1}^q \prob \Big(
w_{j-1} \stackrel{\,\, {2^{Q(j)} r \over q}}{\longleftrightarrow}
v_j \Big ) \, .$$ By translation invariance we can write this as
$$ \prob(X_1)\prob(X_2) \leq \big(C s^{1-d/2}\big)^q \sum_{Q \in \QQ} \prod_{j=1}^q
\prob(0 \stackrel{\, s}{\leftrightarrow} w_j-v_j) \prod_{j=1}^q
\prob \Big(0 \stackrel{\,\, {2^{Q(j)} r \over
q}}{\longleftrightarrow} v_j -  w_{j-1}\Big ) \, .
$$
We now sum this over all $v_j$ and $w_j$ and change the order of summation to
get
$$
\sum _{v_1, \ldots, w_q} \prob(X_1)\prob(X_2) \leq \big(C s^{1-d/2}\big)^q
\sum_{Q \in \QQ} \Big [ \Big (\sum_v \prob( 0 \stackrel{2s}{\leftrightarrow} v
) \Big)^q
\prod _{j=1}^q \sum_v  \prob \Big (0 \stackrel{\,\, {2^{Q(j)} r
\over q}}{\longleftrightarrow} v \Big ) \Big ] \, .
$$
Part (\ref{enu:EB0r}) of Theorem \ref{exp} gives that
$$ \sum_v \prob( 0 \stackrel{\,s}{\leftrightarrow} v ) \leq Cs \,
,$$ and hence
$$ \sum _{v_1, \ldots, w_q} \prob(X_1)\prob(X_2) \leq \big(C
s^{2-d/2}\big)^q \sum_{Q \in \QQ} \Big ( \prod _{j=1}^q C
{2^{Q(j)} r \over q} \Big ) \, .$$ The arithmetic-geometric mean
inequality gives that
$$ \prod _{j=1}^q C{2^{Q(j)} r \over q} \leq \Big ( {Cr \over q^2} \sum_j 2^{Q(j)} \Big
)^q \leq \Big ( {Cr \over q} \Big )^q \, ,$$ where the last
inequality is due to (\ref{qcond}). Applying Claim
\ref{technical} gives
$$ \sum _{v_1, \ldots, w_q} \prob(X_1)\prob(X_2) \leq \left ( {C
r s^{2-d/2} \over q}  \right ) ^q \, .$$ Putting this into
(\ref{lastmove}) and recalling the value of $q$ in (\ref{qval})
gives that
$$ \prob (B_1) \leq \Big ( {C \over K_1} \Big )^ q \, .$$
Since $s \leq r^{1/d}$ we have that $q \geq c\sqrt{r}$ for some
small $c>0$ and we conclude the proof by choosing $K_1$ large
enough.  \qed

\medskip
\noindent{\bf Proof of Lemma \ref{case2}.}
The argument is similar to that of the previous lemma, but we need a different
definition for admissible tuple. There we worked around the
fact that the various excursions intertwine over $\beta$ in complicated ways
(in other words, that the segments $(v,w)$ could intersect) by simply
selecting a collection of $q$ excursions that did not intertwine. This costed
a factor of $s$ in the number of moments $q$ but we didn't care. For larger
$s$ we can no longer pay this factor. On the other hand, we no longer need to
be careful with factors of the type $q^q$ and this simplifies some of the
calculations.

Denote by $B_2$ the
required event
$$ B_2 = \Big \{ \exists \,\, (K,r){\hbox{{\rm -paths} }} \beta, \gamma \with E_s(\beta,
\gamma) \geq K_2 r s^{3.1-d/2} \Big \} \, .$$
As in the previous lemma, assume the decomposition of $\gamma$ into excursions
$(\delta_1, \delta_2, \ldots)$ such that the excursions are
ordered by the order of their beginning vertex. For $i_1 < \cdots
< i_k$ write $v_j,w_j\in \beta$ for the respective starting and
ending vertices of excursion $\delta_{i_j}$ for $1 \leq j \leq k$.
Also write $\beta[v_j,w_j]$ for the sub-path of $\beta$ starting
at $v_j$ and ending at $w_j$. For an integer $k>0$ denote by
$Y(k)$ the event
$$ Y(k) = \Big \{ \exists \, i_1 < \cdots < i_k \with \beta[v_j,w_j] \cap \beta[v_1,w_1] \neq \emptyset \hbox{ {\rm for} } j=2,\ldots, k \Big \} \, .$$
Observe that the event $Y(k)$ is equivalent to saying that there
exists $i_1 < \cdots < i_k$ such that $v_j$ precedes $w_1$ in
$\beta$ for all $1 \leq j \leq k$. We first show that \be
\label{yk} \prob (Y(k)) \leq C_k rs^{k(3-d/2)} \, .\ee To see this
let $i_1 < \cdots < i_k$ be as in the definition of $Y(k)$ and let
$x_1, \ldots ,x_{2k}$ be the total collection of $v_j$ and $w_j$,
ordered by their appearance in $\beta$. If $Y(k)$ occurs we must
have the following:
\begin{enumerate}
\item There exists a path $\beta_0$ from $0$ to $x_1$ with
$\len(\beta_0) \leq r$.

\item There exists simple open paths $\beta_j$ between $x_j$ and
$x_{j+1}$ with $\len (\beta_j) \leq 2s$. This is because $\beta$
is a shortest path so the length of $\beta(v_1,w_1)$ at most $2s$
and all other $v_j$'s precede $w_1$ in $\beta$.

\item There exists a perfect matching of the $x_j$ i.e.~a function
$\varphi:\{1,\ldots,2k\}\to\{1,\ldots,2k\}$ with $\varphi(\varphi(j))=j$
such that between every couple $x_j, x_{\varphi(j)}$ there exists a simple
open path $\delta_j$ with $\len(\delta_j)\geq s$.

\item The paths $\beta_j$ and $\delta_j$ are edge-disjoint (note
that there are only $k$ different $\delta_j$).
\end{enumerate}
Write $Y(x_1, \ldots, x_{2k})$ for the above event. Since the
number of perfect matchings $\varphi$ is $2^k k!$, the BK inequality
and Lemma \ref{longpath} give that
$$ \prob (Y(x_1,\ldots, x_{2k})) \leq 2^k k! \big ( Cs^{1-d/2} \big)^k \prob ( 0
\lrr x_1 ) \prod_{j=1}^{2k-1} \prob ( x_j \stackrel{
2s}{\leftrightarrow} x_{j+1} ) \, .$$ By translation invariance we
have
$$ \prob (Y(x_1,\ldots, x_{2k})) \leq 2^k k! \big ( Cs^{1-d/2} \big)^k \prob ( 0
\lrr x_1 ) \prod_{j=1}^{2k-1} \prob ( 0 \stackrel{
2s}{\leftrightarrow} x_{j+1}-x_j ) \, .$$ We sum this over $x_1,
\ldots, x_{2k}$ and use part $(1)$ of Theorem \ref{exp} to get
$$ \prob (Y(k)) \leq 2^k k! \big ( Cs^{1-d/2} \big)^k \cdot Cr
\cdot (Cs)^{2k-1} \, ,$$ which gives (\ref{yk}).

Since $s \geq r^{1/d}$ and $d \geq 7$ we can fix some large $k$
such that
$$ \prob (Y(k)) \leq r^{-4} \, ,$$
and continue in a similar way to Lemma \ref{case1} only we take
$q$ this time to be a large constant (independent of $r$) to be
chosen later. We say a $q$-tuple $i_1 < \cdots < i_q$ is {\em
admissible} if $i_{j+1} - i_j > k$. As before, if $B_2$ occurs,
then there are at least $L={1 \over 2} K_2 r s^{2.1 - d/2}$
excursions with length in $[s,2s)$. We get that if
$$ \bigg | i_j - j{L \over q+1} \bigg | < {1 \over 2} \bigg ( {L
\over q+1} -k \bigg ) \, ,$$
then the tuple $(i_1, \ldots, i_q)$ is
admissible. Hence, if $B_2$ occurs then there are at least
$(L/(q+1)-k)^q$ admissible tuples. Observe that if $Y(k)$ does not
occur then for any admissible tuples $i_1 < \cdots < i_q$ the
vertex $w_{j}$ (the end vertex of $\delta_{i_j}$) precedes
$v_{i_{j+1}}$ (the beginning vertex of $\delta_{i_{j+1}}$) in the
order induced by $\beta$. Denote this event by $X(v_1, \ldots,
w_q)$ and so we can repeat the argument of Lemma \ref{case1}
\be\label{eq:B2X1X2}
\prob (B_2\setminus Y(k)) \leq \bigg(\frac{L}{q+1}-k\bigg)^{-q} \sum _{v_1, \ldots, w_q}
\prob (X_1(v_1, \ldots, w_q))\prob (X_2(v_1, \ldots, w_q)) \, ,
\ee
where $X_1$ and $X_2$ are defined as in the previous lemma. The BK
inequality yields
\begin{multline*}
 \prob(X_1)\prob(X_2) \leq \prod_{j=1}^q \prob(v_j \stackrel{\,
s}{\leftrightarrow} w_j) \cdot\\
\prob(\exists \delta_j \hbox{ {\rm from
$v_j$ to $w_j$ with} } \len(\delta_j)\geq s) \prod_{j=1}^q
\prob(w_{j-1} \lrr v_j) \, ,
\end{multline*}
and by Lemma \ref{longpath} we get
$$ \prob(X_1)\prob(X_2) \leq \big (C s^{1-d/2}\big)^q \prod_{j=1}^q \prob(v_j \stackrel{\,
s}{\leftrightarrow} w_j) \prod_{j=1}^q \prob(w_{j-1} \lrr v_j) \,
.$$ We sum over $v_1, \ldots , w_q$ and use translation invariance
and part $(1)$ of Theorem \ref{exp} as in the previous lemma to
get
$$
\sum_{v_1, \ldots ,w_q} \prob(X) \leq \big ( Cr s^{2-d/2} \big
)^q \, .
$$
Inserting this and the value of $L$ into (\ref{eq:B2X1X2}) gives
that
$$ \prob (B_2 \setminus Y(k)) \leq { ( Cr s^{2-d/2} \big
)^q \over \big ( {1 \over 2} K_2 r s^{2.1 - d/2}/(q+1) - k \big
)^q }\, .$$ We put $K_2 = 4(q+1)k$ and since $s \leq
r^{1/(d/2-2.1)}$ we get
$$ \prob (B_2 \setminus Y(k)) \leq \Big ( {C \over K_2 s^{0.1} }
\Big )^q \, ,$$ and since $s \geq r^{1/d}$ it is clear that we can
take $q$ large enough (but fixed) such that $\prob(B_2 \setminus
Y(k)) \leq r^{-4}$. The lemma follows by our previous estimate
that $\prob (Y(k))\leq r^{-4}$. \qed

\medskip
\noindent{\bf Proof of Lemma \ref{case3}.} Denote by $B_3$ the
required event
$$ B_3 = \Big \{ \exists \,\, (K,r){\hbox{{\rm -paths} }} \beta, \gamma \with E_s(\beta,
\gamma) \geq K_3 s \Big \} \, .$$ The analysis here is the
simplest because in this regime of $s$ such large excursions are
very rare. If $B_3$ occurs then there are at least $q={1 \over
2}K_3$ excursions of length in $[s,2s)$. Thus, if $B_3$ occurs
then there exists vertices $x_1, \ldots ,x_{2q}$ such that
\begin{enumerate}
\item There exists a perfect matching of the $x_j$ $\varphi:\{1,\ldots
,2q\}\to \{1, \ldots , 2q\}$ such that between
every pair $x_j, x_{\varphi(j)}$ there exists a simple open path
$\delta_j$ of length at least $s$.

\item For $j=1, \ldots, 2q$ there exists a simple open path
$\beta_i$ between $x_{i-1}$ and $x_i$ (denote $x_0=0$). At least
$q$ of these $\beta_j$ have length at most $2s$ and the rest have
length at most $r$.

\item The paths $\beta_j$ and $\delta_j$ are edge-disjoint (again,
there are only $q$ disjoint $\delta_j$'s).
\end{enumerate}
Denote by $X(x_1, \ldots, x_{2q})$ this event. BK inequality and
Lemma \ref{longpath} give that
$$ \prob(X) \leq 2^q q! \big ( Cs^{1-d/2} \big)^q \mathop {\sum_{A \subset \{1,\ldots, 2q\}}}_{|A|\geq q} \prod _{j\in A} \prob ( x_{j-1}
\stackrel{2s}{\leftrightarrow}x_j ) \prod_{j \not \in A} \prob (
x_{j-1} \lrr x_j ) \, .$$ We sum this over $x_1, \ldots, x_{2q}$
and use translation invariance as before to get
$$ \sum_{x_1,\ldots,x_{2q}} \prob(X) \leq \big ( C q s^{1-d/2} \big)^q \mathop {\sum_{A \subset \{1,\ldots, 2q\}}}_{|A|\geq
q} \Big ( \sum_{x \in \Z^d} \prob ( 0
\stackrel{2s}{\leftrightarrow} x) \Big)^{|A|} \Big (\sum_{x \in
\Z^d} \prob ( 0 \lrr x ) \Big )^{2q - |A|} \, ,$$ and by part
$(1)$ of Theorem \ref{exp} we get
$$ \sum_{x_1,\ldots,x_{2q}} \prob(X) \leq \big ( C q^2 r s^{2-d/2}
\big )^q \, .$$ Since $s \geq r^{1/(d/2-2.1)}$ we have that
$s^{2-d/2} \leq r^{-1 - {0.2 \over d}}$ and hence
$$ \sum_{x_1,\ldots,x_{2q}} \prob(X(x_1, \ldots, x_{2q})) \leq
\big ( C q^2 r^{-0.2/d} \big )^q \, ,$$ and choosing $K_3$ large
enough (but fixed) concludes the proof of the lemma. \qed

}


\subsection{Intrinsic metric arm exponent}\label{sub:upper} Here
we prove part (\ref{enu:Gam<}) of Theorem \ref{exp}. The proof
relies on the result of Barsky and Aizenman \cite{BA} stating that
\noindent {\em A lattice in $\R^d$ satisfying the
triangle condition satisfies, as $h\to 0$ that}
$$
\sum_{j=1}^\infty \prob(|\C(0)|=j)(1-e^{-jh})\approx h^{1/2}.
$$

\noindent This implies an estimate of $\prob(|\C(0)|)>n$. Just fix
$h=1/n$ and get \be \prob \big ( |\C(0)| > n \big ) \leq {C_1
\over n^{1/2}} \, . \ee We remark that Hara and Slade achieved a
significantly stronger estimate \cite{HaS}.

Since the event $\{|\C(0)| > n\}$ is monotone, we get \be
\label{voldec} \prob \big(|\C_G(0)|>n \big) \leq {C_1 \over
n^{1/2}} \qquad \text{ for all }G\subset E(\Z^d) \ee where
$\C_G(0)$ is the component containing $0$ in percolation on $G$
with $p=p_c(\Z^d)$ (and as in the definition of $\Gamma$, {\em
not} in the critical $p$ of $G$ itself).

\begin{proof}[{\bf Proof of part (\ref{enu:Gam<}) of Theorem
    \ref{exp}}]\label{page:II}
Let $A\geq 1$ be a large number such that
$$ 3^3 A^{2/3} + C_1 A^{2/3} \leq A \, ,$$
where $C_1$ is from (\ref{voldec}). We will now prove that
$\Gamma(r) \leq 3Ar^{-1}$. This will follow by showing
inductively that for any integer $k>0$ we have
$$ \Gamma(3^k) \leq {A \over 3^k} \, .$$
This is trivial for $k=0$ since $A \geq 1$. Assume the claim for
all $j<k$ and we prove for $k$. Let $\eps = \eps(C_1)>0$ be a
small constant to be chosen later and for any $G \subset E(\Z^d)$
write
\begin{eqnarray}
\label{indstep} \lefteqn{\prob(H(3^{k};G)) \leq}\qquad&&\nonumber\\
 &\leq &\prob \Big ( \partial
B(0,3^k;G) \neq \emptyset \and |\C_G(0)|
\leq \eps 9^{k} \Big ) + \prob \big ( |\C_G(0)| > \eps 9^{k} \big ) \nonumber \\
&\leq  &\prob \Big ( \partial B(0,3^{k};G) \neq \emptyset
\and |\C_G(0)| \leq \eps 9^{k} \Big ) + {C_1 \over \sqrt{\eps}
3^{k}} \, ,
\end{eqnarray} where the last inequality is due to
(\ref{voldec}). To estimate the first term on the right hand side
we claim that
\begin{equation}
\label{claim} \prob \Big ( \partial
B(0,3^{k};G) \neq \emptyset \and |\C_G(0)| \leq \eps 9^{k} \Big )
\leq \eps 3^{k+1}(\Gamma(3^{k-1}))^2 \, .
\end{equation}
To see this observe that if $|\C_G(0)| \leq \eps 9^{k}$ then
there must be some level $j \in [\frac{1}{3}3^k, \frac{2}{3}3^k ]$ such that
$|\partial B(0,j;G)| \leq \eps 3^{k+1}$. Denote by $j$ the first
such level. If, in addition, $\partial B(0,3^{k};G) \neq
\emptyset$ then
at least one vertex $v$ of the $\eps 3^{k+1}$ vertices of level
$j$ ``reaches level $3^{k-1}$''. Formally we do as in the proof of
lemma \ref{effres}, i.e. define $G_2$ to be $G$ with all edges
needed to calculate $B(0,j;G)$ removed and get that
\begin{equation*}
\partial B(v,3^{k-1}; G_2)\neq\emptyset
\end{equation*}
which, by the definition of $\Gamma$ (with $G_2$) has probability
$\leq \Gamma(3^{k-1})$.
Applying Markov's
inequality gives
\begin{equation*}
\prob \Big ( \partial
B(0,3^{k};G) \neq \emptyset \and |\C_G(0)| \leq \eps 9^{k} \;\Big|\; B(0,j;G)
\Big )
\leq \eps 3^{k+1}\Gamma(3^{k-1}) \, .
\end{equation*}
As in the proof of Lemma \ref{effres}, we now sum over possible values of $B(0,j;G)$ and get an extra
term of $\prob(H(0,3^{k-1};G))$ because we need to reach level
$3^{k-1}$ to begin with. We can definitely bound
$\prob(H(0,3^{k-1};G))\leq \Gamma(3^{k-1})$ and this gives the
assertion of (\ref{claim}).

We put this
into (\ref{indstep}) and get that
$$  \prob(H(3^{k};G)) \leq \eps 3^{k+1} (\Gamma(3^{k-1}))^2 + {C_1 \over \sqrt{\eps}
3^{k}} \leq \frac{ \eps 3^3 A^2 + C_1 \eps^{-1/2}}{3^k} \,
,$$ where in the last inequality we used the induction hypothesis.
Put now $\eps = A^{-4/3}$. Since the last inequality holds for any
$G\subset E(\Z^d)$ we have
$$ \Gamma(3^k) \leq {3^3 A^{2/3} + C_1 A^{2/3} \over 3^k} \leq
{A \over 3^k} \, ,$$ where the last inequality is by our choice of
$A$. This completes our inductive proof that $\Gamma(3^k) \leq A
3^{-k}$. Now, for any $r$ choose $k$ such that $3^{k-1} \leq r
< 3^k$ then we have
\begin{equation*}
 \Gamma(r) \leq \Gamma(3^{k-1}) \leq {A \over 3^{k-1}} < {3A \over r}
\, .\qedhere
\end{equation*}
\end{proof}


%

\subsection{Corresponding lower bounds.} In the following we provide the
corresponding lower bounds to the estimates of Theorem \ref{exp}.
\smallskip

\noindent{\bf Proof of part (i) of Theorem \ref{lowerexp}.} Let $x
\in \Z^d$ and write $|x|$ for the Euclidean distance of $x$ from
$0$. We estimate the quantity $\E[ d_{\Z^d_p}(0,x) \mid 0 \lr x
]$. If $0 \lr x$ then we have that $d_{\Z^d_p}(0,x)$ is no more
than the number of $y\in \Z^d$ such that the events $0\lr y$ and
$y \lr x$ occur disjointly. By the BK inequality and the two-point
function estimate (\ref{tpt}) we learn that
$$ \E [ d_{\Z^d_p}(0,x) {\bf 1}_{\{0\lr x\}} ] \leq C \sum _{y \in \Z^d}
|y|^{2-d}|x-y|^{2-d} \leq C |x|^{4-d} \, ,$$ where the last
inequality is a straightforward calculation. Hence $\E [
d_{\Z^d_p}(0,x) \mid 0 \lr x ] \leq C |x|^2$. We learn that if $x$
is such that $|x| \leq \sqrt{r/2C}$, then Markov's inequality
implies that $\prob ( d_{\Z^d_p}(0,x) \leq r \mid 0 \lr x ) \geq
1/2$. By this and (\ref{tpt}) we conclude that
$$ \E |\bcr(0,r;\Z^d)| \geq \sum _{x \, : \, |x| \leq \sqrt{r/2C}} \prob ( 0 \lr x \and
d_{\Z^d_p}(0,x) \leq r ) \geq  {1\over 2}  \sum _{x \, :\, |x|
\leq \sqrt{r/2C}} |x|^{2-d} \geq c r \, ,$$
where $c>0$ is a small constant. \qed

\smallskip
\noindent{\bf Proof of part (ii) of Theorem \ref{lowerexp}.} We
use a second moment argument. Fix some $\lambda >1$ to be chosen
later. By part (\ref{enu:EB0r}) of Theorem \ref{exp} we have that
$$ \E |\bcr (0,r)| \leq C_1 r \, ,$$
and by part (i) of Theorem \ref{lowerexp} we have
$$ \E |B(0, \lambda r)| \geq c_1 \lambda r \, .$$
Put $\lambda = 2C_1/c_1$ to get that
$$ \E |B(0,\lambda r) \setminus B(0,r)| \geq c_1 \lambda r -
C_1 r = C_1r \, .$$ We now estimate the second moment of
$|B(0,\lambda r)|$. Indeed, if $0 \lrrl x$ and $0 \lrrl y$ then
there must exist $z \in \Z^d$ such that the events $0 \lrrl z$, $z
\lrrl x$ and $z \lrrl y$ occur disjointly. Hence, the BK
inequality gives
$$ \E |B(0,\lambda r)|^2 \leq \sum_{x,y,z} \prob ( 0 \lrrl z)
\prob (z \lrrl x) \prob (z \lrrl y) = \Big [ \sum_{x \in \Z^d}
\prob (0 \lrrl x) \Big ] ^3 \leq Cr^3 \, ,$$ where the last
inequality is by part (\ref{enu:EB0r}) of Theorem \ref{exp}. The
estimate $\prob(Z>0) \geq (\E Z)^2/\E Z^2$ valid for any
non-negative random variable $Z$ yields that
$$ \prob \big ( |B(0,\lambda r) \setminus B(0,r)| > 0 \big )
\geq {C_1^2 r^2 \over Cr^3 } \geq {c \over r} \, ,$$ which
concludes our proof since the event above implies $H(r)$. \qed

\section*{Acknowledgements}
We are indebted to Yuval Peres for suggesting the use of $\Gamma$
instead of $H$ in Theorem \ref{exp}. Our original approach used a
different method of ``monotonizing'' $H$ and Peres' suggestion
has greatly simplified our proof. We would also like to thank
Chris Hoffman for valuable conversations.

The research of AN was supported in part by NSF grant \#DMS-0605166. Part of
this work was carried out while GK was a visitor at IMPA and AN was a visitor
at the Theory Group of Microsoft Research. We would like to extend our
gratefulness for the kind hospitality of both institutions, and especially
that of Vladas Sidoravicius.


\vspace{.05 in}\noindent
{\bf Gady Kozma}: \texttt{gady.kozma(at)weizmann.ac.il} \\
The Weizmann Institute of Science, \\
Rehovot POB 76100, \\
Israel.

\medskip \noindent
{\bf Asaf Nachmias}: \texttt{asafnach(at)math.berkeley.edu} \\
Department of Mathematics,
UC Berkeley\\
Berkeley, CA 94720, USA.

\end{document}